\newtheorem{thm}{Theorem}%
\newtheorem{cor}[thm]{Corollary}%
\newtheorem{lem}[thm]{Lemma}%
\newcommand{\dC}{\mathbb{C}}
\newcommand{\dE}{\mathbb{E}}
\newcommand{\dR}{\mathbb{R}}
\newcommand{\cD}{\mathcal{D}}
\newcommand{\ABS}[1]{{{\left| #1 \right|}}} % |1|
\renewcommand{\leq}{\leqslant}             % redef. of < or =
\renewcommand{\geq}{\geqslant}             % redef. of > or =
\newcommand{\wt}{\widetilde}
\newcommand{\ind}{\mathds{1}}
\newcommand{\tr}{\mathrm{tr}}
\renewcommand{\Im}{\mathfrak{Im}}
\newcommand{\weak}{\rightsquigarrow}
\title[Spectrum of non-hermitian random matrices]{On the spectrum of sum and product of non-hermitian random matrices}
\date{Preprint, compiled \today}
\author{Charles Bordenave}
\address[Ch.~Bordenave]{IMT UMR 5219 CNRS and Universit\'e Paul-Sabatier Toulouse III, France}
\email{charles.bordenave(at)math.univ-toulouse.fr}
\urladdr{http://www.math.univ-toulouse.fr/~bordenave/}
\keywords{generalized eigenvalues, non-hermitian random matrices, spherical law.}
\subjclass[2000]{60B20 ; 47A10; 15A18.}
\begin{document}

\maketitle

\begin{abstract}
In this short note, we revisit the work of T. Tao and V. Vu on large non-hermitian random matrices with independent and identically distributed (i.i.d.) entries with mean zero and unit variance. We prove under weaker assumptions that the limit spectral distribution of sum and product of non-hermitian random matrices is universal. As a byproduct, we show that the generalized eigenvalues distribution of two independent matrices converges almost surely to the uniform measure on the Riemann sphere.  \end{abstract}

\section{Introduction}

We start with some usual definitions. We endow the space of probability measures on $\dC$ with the topology of weak convergence: a sequence of probability measures $(\mu_n)_{n \geq 1}$ \emph{converges weakly} to $\mu$ is for any bounded continuous function $f : \dC \to \dR$,
$$
 \int f d\mu_n - \int f d\mu 
$$
converges to $0$ as $n$ goes to infinity. In this note, we shall denote this convergence by $\mu_n \underset{n\to\infty}{\weak} \mu$. Similarly, for two sequences of probability measures $(\mu_n)_{n \geq 1}, (\mu'_n)_{n \geq 1}$, we  will use $\mu_n - \mu'_n \underset{n\to\infty}{\weak} 0$, or say that $\mu_n - \mu_n'$ tends weakly to $0$, if 
$$
 \int f d\mu_n - \int f d\mu'_n $$ converges to $0$ for any bounded continuous function $f$. We will say that a measurable function $f : \dC \to \dR$ is \emph{uniformly bounded} for $(\mu_n)_{n \geq 1}$ if  $$
\limsup_{n \to \infty}  \int | f | d\mu_n < \infty.
 $$
Finally, recall  that a function $f$ is \emph{uniformly integrable} for $(\mu_n)_{n \geq 1}$ if 
$$
\lim_{ t \to + \infty} \limsup_{n  \to \infty}  \int_{|f |Ê\geq t}   | f | d\mu_n = 0.
$$ 
The above definitions will also be used for probability measures on $\dR_+ = [0, \infty)$ and functions $f : \dR_+ \to \dR$.

The \emph{eigenvalues} of an $n\times n$ complex matrix $M$ are the roots in
$\dC$ of its characteristic polynomial. We label them
$\lambda_1(M),\ldots,\lambda_n(M)$ so that
$|\lambda_1(M)|\geq\cdots\geq|\lambda_n(M)|\geq0$. We also denote by
$s_1(M)\geq\cdots\geq s_n(M) \geq 0$ the \emph{singular values} of $M$, defined for
every $1\leq k\leq n$ by $s_k(M):=\lambda_k(\sqrt{MM^*})$ where
$M^*$ is the conjugate transpose of $M$. We define the
empirical spectral measure and the empirical singular values measure as
$$
\mu_M = \frac 1 n \sum_{k=1} ^n \delta_{\lambda_k (M)} %
\quad \text{and } \quad %
\nu_M = \frac 1 n \sum_{k=1} ^n \delta_{s_k (M)}.
$$
Note that $\mu_M$ is a probability measure on $\dC$ while $\nu_M$ is a probability measure on $\dR_+$. The \emph{generalized eigenvalues} of $(M,N)$, two $n\times n$ complex matrices, are the zeros of the polynomial $\det ( M - z N)$. If $N$ is invertible, it is simply the eigenvalues of $N^{-1} M$. 

Let $(X _{ij})_{ i , j \geq 1}$ and $(Y _{ij})_{ i , j \geq 1}$ be independent i.i.d. complex random variables with mean $0$ and variance $1$. 
Similarly, let $(G _{ij})_{ i , j \geq 1}$ and $(H _{ij})_{ i , j \geq 1}$ be independent complex
centered gaussian variables with variance $1$, independent of $(X_{ij},Y_{ij})$.  We consider the random matrices $X_n = (X_{ij}) _{ 1  \leq i, j \leq n}$, $Y_n = (Y_{ij}) _{ 1  \leq i, j \leq n}$, $ G_n  = (G_{ij}) _{ 1  \leq i, j \leq n}$ and  $H_n  = (H_{ij}) _{ 1  \leq i, j \leq n}$. For ease of notation, we will sometimes drop the subscript $n$. It is known that almost surely (a.s.) for $n$ large enough, $X$ is invertible (see the forthcoming Theorem \ref{le:small}) and then $\mu_{ X^{-1} Y}$ is a well defined random probability measure on $\dC$.

Now, let $\mu$ be the probability measure whose density with respect to the Lebesgue measure on $\dC \simeq \dR^2$ is 
$$
\frac{1} { \pi ( 1 + |z|^2 )^2}. 
$$
Through stereographic projection, $\mu$ is easily seen to be the uniform measure on the Riemann sphere. Haagerup and several authors afterwards have independently observed the following beautiful identity (see Krishnapur \cite{K}, Rogers \cite{R} and Forrester and Mays \cite{FM}). 

\begin{lem}[Spherical ensemble]\label{prop:riemann} 
For each integer $n \geq 1$, $$
\dE \mu_{ G^{-1} H}   =  \mu. 
$$
\end{lem}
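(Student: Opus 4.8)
The plan is to compute the law of $M:=G^{-1}H$ explicitly, identify the induced joint law of its eigenvalues as a determinantal point process, and then read off the one-point function. (Since $\det G\neq 0$ almost surely, $M$ is a.s.\ well defined, and the set of $n\times n$ matrices having a repeated eigenvalue is Lebesgue-null and can be ignored throughout.) First I would determine the density of $M$ on $\dC^{n\times n}$: the pair $(G,H)$ has density proportional to $\exp(-\tr G^*G-\tr H^*H)$, and the change of variables $(G,H)\mapsto(G,M)$ with $H=GM$ has real Jacobian $|\det G|^{2n}$. Using the identity $\tr G^*G+\tr(GM)^*(GM)=\tr\!\big(G^*G(I_n+MM^*)\big)$ and then the substitution $G=G'(I_n+MM^*)^{-1/2}$, the remaining integral over $G'$ is Gaussian and yields, up to a constant, $\det(I_n+MM^*)^{-2n}$; hence $M$ has density proportional to $\det(I_n+MM^*)^{-2n}$ on $\dC^{n\times n}$. (For $n=1$ this is already the density of $\mu$, as it must be.) In particular this law depends on $M$ only through $MM^*$.

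Next I would pass to eigenvalues through the Schur decomposition $M=U(Z+N)U^*$, where $U$ is unitary (modulo the diagonal torus), $Z=\mathrm{diag}(z_1,\dots,z_n)$, and $N$ is strictly upper triangular; the Jacobian of this substitution contributes the Vandermonde factor $\prod_{i<j}|z_i-z_j|^2$. Since $\det(I_n+MM^*)=\det\!\big(I_n+(Z+N)(Z+N)^*\big)$ does not depend on $U$, the integration over $U$ only produces a constant, and the heart of the matter is the integral over $N$, for which I would prove, by induction on the matrix size (with the exponent $2n$ replaced by a general parameter $p$),
\[
\int \det\!\big(I_n+(Z+N)(Z+N)^*\big)^{-p}\,dN \;=\; c_{n,p}\prod_{i=1}^{n}\big(1+|z_i|^2\big)^{-(p-n+1)} .
\]
Writing $Z+N$ in block form with last column $(v,z_n)^{\top}$, the Schur-complement identity together with the matrix determinant lemma gives $\det\!\big(I_n+(Z+N)(Z+N)^*\big)=\det(A)\big[(1+|z_n|^2)+v^*A^{-1}v\big]$ with $A=I_{n-1}+(Z'+N')(Z'+N')^*$; integrating out $v\in\dC^{n-1}$ is a Beta integral that extracts the factor $(1+|z_n|^2)^{-(p-n+1)}$ and lowers the exponent on $\det(A)$ to $p-1$, so the inductive hypothesis applies to the $(n-1)\times(n-1)$ block $Z'+N'$. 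Taking $p=2n$ gives the joint eigenvalue density $\propto\prod_{i<j}|z_i-z_j|^2\prod_{i=1}^{n}(1+|z_i|^2)^{-(n+1)}$.

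Finally, this is the density of a determinantal point process whose weight $w(z)=(1+|z|^2)^{-(n+1)}$ is rotation invariant, so the monic orthogonal polynomials are the monomials $z^k$, $0\leq k\leq n-1$, with $\int_0^\infty r^k(1+r)^{-(n+1)}\,dr=k!\,(n-1-k)!/n!$. Hence the one-point correlation function with respect to Lebesgue measure is
\[
\rho_n(z)=\frac{1}{\pi(1+|z|^2)^{n+1}}\sum_{k=0}^{n-1}\frac{n!}{k!\,(n-1-k)!}\,|z|^{2k}=\frac{n}{\pi(1+|z|^2)^{n+1}}\sum_{k=0}^{n-1}\binom{n-1}{k}|z|^{2k}=\frac{n}{\pi(1+|z|^2)^{2}},
\]
by the binomial theorem, and therefore $\dE\mu_M=\tfrac1n\rho_n(z)\,dz=\mu$.

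The step I expect to be the main obstacle is the $N$-integral above. Unlike the Ginibre case — where the Gaussian weight factorizes over the entries of $Z+N$ and the strictly-upper-triangular part integrates out for free — here the weight $\det(I_n+MM^*)^{-2n}$ genuinely couples the entries, and one must run the Schur-complement induction while tracking the way the exponent drops by one at each stage; this bookkeeping is precisely what makes the clean product $\prod_i(1+|z_i|^2)^{-(n+1)}$ emerge. All remaining normalizing constants are irrelevant and are fixed at the end by $\int\rho_n=n$.
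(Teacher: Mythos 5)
Your proof is correct and complete. The paper itself does not prove Lemma~\ref{prop:riemann}; it attributes the identity to Haagerup and cites Krishnapur, Rogers, and Forrester--Mays for proofs, so there is no in-paper argument to compare against. What you have written is, in essence, the derivation found in those references: (a) integrate out $G$ to obtain the matrix density $\propto\det(I+MM^*)^{-2n}$; (b) pass to the Schur decomposition, integrate out the strictly upper-triangular part by a Schur-complement/matrix-determinant-lemma induction with exponent dropping by one at each stage, arriving at the joint density $\propto\prod_{i<j}|z_i-z_j|^2\prod_i(1+|z_i|^2)^{-(n+1)}$; and (c) use rotation invariance of the weight to diagonalize the correlation kernel in the monomial basis, reducing the one-point function to the binomial theorem. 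All three steps check out: in particular the identity $\det\bigl(I_n+(Z+N)(Z+N)^*\bigr)=\det(A)\bigl[(1+|z_n|^2)+v^*A^{-1}v\bigr]$ is right, the Jacobian $\det(A)$ from the substitution $v=A^{1/2}w$ correctly lowers the exponent of $\det(A)$ from $p$ to $p-1$, the Beta integral supplies $(1+|z_n|^2)^{-(p-n+1)}$, the convergence condition $p>n-1$ is maintained along the induction when started at $p=2n$, and the moments $\int_0^\infty r^k(1+r)^{-(n+1)}dr=k!(n-1-k)!/n!$ combine via $n!/(k!(n-1-k)!)=n\binom{n-1}{k}$ and the binomial theorem to give $\rho_n(z)=n/\bigl(\pi(1+|z|^2)^2\bigr)$, hence $\dE\mu_{G^{-1}H}=\mu$. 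The one step you flagged as the likely obstacle --- the $N$-integral --- is indeed the delicate one, and your Schur-complement induction handles it correctly.
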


By reorganizing the results of Tao and Vu \cite{MR2409368,tao-vu-cirlaw-bis}, we will prove a universality result. 

\begin{thm}[Universality of generalized eigenvalues]\label{prop:taovuuniversality}
Almost surely, $$
\mu_{ÊX^{-1} Y } - \mu_{ G^{-1} H}   \underset{n\to\infty}{\weak}  0.$$
\end{thm}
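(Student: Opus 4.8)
The plan is to follow the Hermitization strategy of Girko, Tao--Vu, and Bordenave--Chafaï: to control a sequence of empirical measures $\mu_n$ on $\dC$ it suffices to control, for Lebesgue-almost every $z \in \dC$, the logarithmic potentials $U_{\mu_n}(z) = -\int \log|w-z|\,d\mu_n(w)$, and these are expressed through the singular values of a shifted matrix. Concretely, writing $A_n = X_n^{-1}Y_n$ and $B_n = G_n^{-1}H_n$, we have $U_{\mu_{A_n}}(z) = -\int_0^\infty \log t \, d\nu_{A_n - zI}(t)$, and similarly for $B_n$. Since $\det(X_n^{-1}Y_n - zI) = \det(X_n)^{-1}\det(Y_n - zX_n)$, the singular value distribution $\nu_{A_n-zI}$ is the same as that of $(X_n)^{-1}(Y_n - zX_n)$; more usefully, the generalized eigenvalues of $(Y_n, X_n)$ shifted by $z$ are governed by the matrix $Y_n - zX_n$ together with $X_n$. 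The first step is therefore to reduce Theorem~\ref{prop:taovuuniversality} to the statement: for a.e.\ fixed $z$, almost surely
$$
\int_0^\infty \log t \, d\nu_{Y_n - zX_n}(t) - \int_0^\infty \log t \, d\nu_{H_n - zG_n}(t) \underset{n\to\infty}{\longrightarrow} 0,
$$
together with the corresponding statement with $Y_n, H_n$ replaced by $0$ (i.e.\ comparing $\log\det$ of $X_n$ and $G_n$). Passing from a.e.-$z$ convergence of potentials to weak convergence of $\mu_n$ is the standard replacement principle (Tao--Vu), which requires in addition a tightness/no-escape-of-mass input guaranteeing $\log\|\cdot\|$-type uniform integrability; here the spherical symmetry is a bonus but the real work is the uniform integrability of $\log$ near $0$ and $\infty$ against the $\nu$'s.

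The second and main step is the universality of the singular value distribution of $Y_n - zX_n$. Since $Y_n - zX_n$ has i.i.d.\ entries with mean $0$ and variance $1 + |z|^2$, after rescaling by $(1+|z|^2)^{-1/2}$ this is exactly a matrix covered by the Tao--Vu / Pan--Zhou analysis of the circular law, or more precisely by the Hermitization machinery already developed there: the symmetrized matrix $\begin{pmatrix} 0 & Y_n - zX_n \\ (Y_n - zX_n)^* & 0 \end{pmatrix}$ has a limiting spectral distribution depending only on the variance, and one has a.s.\ convergence of $\nu_{Y_n - zX_n}$ to a fixed (quarter-circle-type, scaled) law, identical to the Gaussian case. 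I would invoke the existing four-moment / swapping results or, more cheaply, the fact that the Gaussian case is exactly solvable via Lemma~\ref{prop:riemann} to identify the limit: the limiting logarithmic potential of $\mu_{G^{-1}H}$ is $U_\mu(z) = \frac12 \log(1+|z|^2)$, and universality of $\nu_{Y_n-zX_n}$ forces $\mu_{X^{-1}Y}$ to have the same potential. The delicate point, as always in these arguments, is the behavior of the smallest singular values: one needs a.s.\ lower bounds of the form $s_{n}(Y_n - zX_n) \geq n^{-C}$ and control on the count of singular values in $[0, n^{-\delta}]$, so that $\log t$ is uniformly integrable and the near-zero part of the integral does not contribute in the limit.

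I expect this last point — the least singular value estimate for $Y_n - zX_n$ and for $X_n$ itself — to be the main obstacle, and it is presumably where the "weaker assumptions" of the abstract are bought: one wants these bounds assuming only mean $0$ and variance $1$, without bounded density or higher moments. The relevant tool is Rudelson--Vershynin / Tao--Vu type anti-concentration and small-ball estimates for random matrices with i.i.d.\ entries (applicable to $Y_n - zX_n$ since its entries are i.i.d.\ with a fixed nondegenerate law, for each fixed $z$), giving $\dP(s_n(Y_n - zX_n) \leq \varepsilon n^{-1/2}) \leq C\varepsilon + o(1)$, which after a Borel--Cantelli argument along a polynomial subsequence and a monotonicity/interlacing argument yields the needed a.s.\ bound; this is the content of the forthcoming Theorem~\ref{le:small} referenced in the excerpt, which I would use as a black box for $X_n$ and adapt for $Y_n - zX_n$. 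Once the tightness, the a.e.-$z$ potential convergence, and the small-singular-value control are in hand, the replacement principle assembles them into the claimed a.s.\ weak convergence $\mu_{X^{-1}Y} - \mu_{G^{-1}H} \weak 0$, and combining with Lemma~\ref{prop:riemann} identifies the common limit as $\mu$, the uniform measure on the Riemann sphere.
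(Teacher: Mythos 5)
Your overall plan --- Girko hermitization, reduction to singular-value log-integrals, universality via Marchenko--Pastur plus least singular value bounds, replacement principle --- is the same framework the paper uses, and your observation that $Y_n - zX_n$ has i.i.d.\ entries with mean $0$ and variance $1+|z|^2$ is a genuine shortcut that is not in the paper: it lets you get the limit of $\nu_{(Y-zX)/\sqrt n}$ directly from Marchenko--Pastur, whereas the paper instead conditions on $X$ and invokes the Dozier--Silverstein theorem (Theorem~\ref{th:DS}) for $\nu_{Y/\sqrt n + M_n}$ with $M_n = -zX_n/\sqrt n$ random but fixed under the conditioning. (Incidentally, this means you do not need any four-moment/swapping argument at all; global Marchenko--Pastur universality for i.i.d.\ entries suffices, and the identification of the limiting potential as $\tfrac12\ln(1+|z|^2)$ belongs to Corollary~\ref{cor:riemann}, not to Theorem~\ref{prop:taovuuniversality}.)

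Where your proposal diverges most sharply from the paper, and where it leaves a real gap, is the organization of the replacement. The paper proceeds in two steps: first $\mu_{X^{-1}Y}-\mu_{X^{-1}H}\weak 0$ via Corollary~\ref{cor:girko} with the \emph{common} factor $M_n$ built from $X^{-1}$; then $\mu_{X^{-1}H}-\mu_{G^{-1}H}\weak 0$ by passing to $\check\mu$, which turns it into $\mu_{H^{-1}X}-\mu_{H^{-1}G}\weak 0$, again with a common factor, now built from $H^{-1}$. The whole point of Corollary~\ref{cor:girko} is that the $\tfrac1n\ln|\det M_n|$ term cancels because $M_n$ is literally the same on both sides, and the uniform-integrability hypothesis then only has to be checked for the ``clean'' matrices $Y/\sqrt n - zX/\sqrt n$ and $H/\sqrt n - zX/\sqrt n$, to which Theorem~\ref{th:UI} applies. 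Your one-shot comparison of $X^{-1}Y$ with $G^{-1}H$ has no common factor, so Corollary~\ref{cor:girko} does not apply as written; to run Lemma~\ref{le:girko}(i')--(ii) directly you would need uniform integrability of $\ln$ for $\nu_{X^{-1}Y-z}=\nu_{X^{-1}(Y-zX)}$, a product of two random matrices, and your discussion only controls the singular values of $Y-zX$ and of $X$ separately. That is not enough: the large singular values of $X^{-1}(Y-zX)$ are not controlled by those of the factors without also knowing the whole singular value profile of $X^{-1}$, and the small ones require a similar combination. You would either have to prove a two-sided variant of Corollary~\ref{cor:girko} in which $M_n$ and a second $N_n$ appear and the hypothesis ``$\tfrac1n\ln|\det M_n| - \tfrac1n\ln|\det N_n| \to 0$'' is added and verified (this is exactly your ``comparing $\log\det$ of $X_n$ and $G_n$'' step), with the uniform-integrability condition kept on the clean matrices $\nu_{A_n - zM_n^{-1}}$, $\nu_{B_n - zN_n^{-1}}$; or you would have to prove the uniform integrability directly for the product $X^{-1}(Y-zX)$, which is harder and not done in your sketch. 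As written, the assembly step is the missing piece. Once you supply one of those two fixes, your one-step route goes through for Theorem~\ref{prop:taovuuniversality}; note, however, that your i.i.d.\ shortcut does not generalize to Theorem~\ref{th:univ}, where the shift matrix is not of the form $zX_n$, which is why the paper sets up the Dozier--Silverstein machinery rather than appealing to Marchenko--Pastur.
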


Applying once Lemma \ref{prop:riemann} and twice Theorem \ref{prop:taovuuniversality}, we get
\begin{cor}[Spherical law]\label{cor:riemann}
Almost surely
 $$
\mu_{ÊX^{-1} Y } \underset{n\to\infty}{\weak}  \mu. $$
\end{cor}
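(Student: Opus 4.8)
The plan is to deduce the corollary from Theorem~\ref{prop:taovuuniversality} and Lemma~\ref{prop:riemann}, the key point being that the spherical law is \emph{exact} in expectation for the Gaussian model, so universality transfers it to the i.i.d.\ model. First I would reduce to the Gaussian case: on the almost sure event furnished by Theorem~\ref{prop:taovuuniversality}, $\int f\,d\mu_{X^{-1}Y} - \int f\,d\mu_{G^{-1}H} \to 0$ for every bounded continuous $f$, so it suffices to prove that $\mu_{G^{-1}H} \weak \mu$ almost surely; then $\int f\,d\mu_{X^{-1}Y} = \big(\int f\,d\mu_{X^{-1}Y} - \int f\,d\mu_{G^{-1}H}\big) + \int f\,d\mu_{G^{-1}H} \to \int f\,d\mu$, and since weak convergence is metrizable the exceptional null set can be taken uniform in $f$.

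For the Gaussian case, Lemma~\ref{prop:riemann} already identifies the limit in the mean: $\dE \int f\,d\mu_{G^{-1}H} = \int f\,d\mu$ for every bounded measurable $f$ and every $n$, so everything reduces to showing that the linear statistics $B_n := \int f\,d\mu_{G^{-1}H}$ cease to fluctuate. Here I would apply Theorem~\ref{prop:taovuuniversality} a second time, with the i.i.d.\ pair taken to be an independent copy $(G',H')$ of the Gaussian pair (its entries are i.i.d.\ with mean $0$ and variance $1$, so this is legitimate), independent of $(G,H)$: it gives $B_n - B'_n \to 0$ almost surely, where $B'_n := \int f\,d\mu_{(G')^{-1}H'}$. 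Since $|B_n - B'_n| \leq 2\|f\|_\infty$, dominated convergence yields $\dE|B_n - B'_n|^2 \to 0$, and as $B_n$ and $B'_n$ are i.i.d.\ this reads $2\,\mathrm{Var}(B_n) \to 0$. Together with $\dE B_n = \int f\,d\mu$ this gives $B_n \to \int f\,d\mu$ in $L^2$, hence in probability, for every bounded continuous $f$; thus $\mu_{G^{-1}H} \weak \mu$, and therefore $\mu_{X^{-1}Y} \weak \mu$, in probability.

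The step I expect to be the main obstacle is upgrading this last convergence from ``in probability'' to ``almost surely'', since Lemma~\ref{prop:riemann} and Theorem~\ref{prop:taovuuniversality} as stated only pin down the first two moments. I would resolve it by making the fluctuation bound quantitative: for the spherical ensemble the eigenvalues of $G^{-1}H$ form a determinantal point process with an explicit kernel, for which $\mathrm{Var}(B_n) = O(n^{-2})$ when $f$ is bounded Lipschitz, so Chebyshev's inequality and the Borel--Cantelli lemma give $B_n \to \int f\,d\mu$ almost surely along the full sequence; alternatively, one keeps track of rates inside the proof of Theorem~\ref{prop:taovuuniversality} to obtain $\sum_n \dE|B_n - B'_n|^2 < \infty$, which has the same effect. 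Intersecting the resulting null sets over $f$ in a countable convergence-determining family gives $\mu_{G^{-1}H} \weak \mu$ almost surely, and combined with the first step, $\mu_{X^{-1}Y} \weak \mu$ almost surely, which is the corollary.
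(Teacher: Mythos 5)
Your skeleton is the same as the paper's: apply Theorem~\ref{prop:taovuuniversality} once to compare $\mu_{X^{-1}Y}$ with $\mu_{G^{-1}H}$, apply it a second time to compare $\mu_{G^{-1}H}$ with an independent Gaussian copy $\mu_{(G')^{-1}H'}$, and use Lemma~\ref{prop:riemann} to identify the mean. Your variance computation is also correct: from $B_n - B'_n \to 0$ a.s., boundedness, and independence, $2\,\mathrm{Var}(B_n) = \dE|B_n - B'_n|^2 \to 0$, which together with $\dE B_n = \int f\,d\mu$ yields $B_n \to \int f\,d\mu$ in probability.

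Where your proposal departs from what is actually needed is in the upgrade to almost sure convergence. You flag it as the main obstacle and resolve it by appealing to the determinantal structure of the spherical ensemble (variance $O(n^{-2})$, Chebyshev, Borel--Cantelli) or by tracking rates inside Theorem~\ref{prop:taovuuniversality}. The first is a genuine but external input not used anywhere in this paper, and the second is vague. Neither is necessary. The almost sure convergence is already forced by the ingredients you listed, via a Fubini argument that bypasses the variance computation entirely. Since $B_n(\omega) - B'_n(\omega') \to 0$ for a.e.\ $(\omega,\omega')$ in the product space, Fubini's theorem gives a fixed $\omega'$ for which $B_n(\omega) - c_n \to 0$ for a.e.\ $\omega$, where $c_n := B'_n(\omega')$ is now a \emph{deterministic} sequence. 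Since $|B_n - c_n| \leq 2\|f\|_\infty$, dominated convergence yields $\dE(B_n - c_n) \to 0$; by Lemma~\ref{prop:riemann}, $\dE B_n = \int f\,d\mu$ for every $n$, hence $c_n \to \int f\,d\mu$ and therefore $B_n \to \int f\,d\mu$ almost surely. Intersecting over a countable convergence-determining family of bounded continuous $f$ (as you already do) gives $\mu_{G^{-1}H} \weak \mu$ a.s., and the first application of Theorem~\ref{prop:taovuuniversality} then gives $\mu_{X^{-1}Y} \weak \mu$ a.s. In short: the ``in probability to a.s.'' gap you anticipated does not actually arise once you condition on a good realization of the auxiliary Gaussian pair rather than averaging over it, and this keeps the proof at the elementary level intended by the paper.
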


This statement was recently conjectured in \cite{R,FM}. More generally, our argument also leads to the following universality result for sums and products of random matrices. 
\begin{thm}[Universality of sum and product of random matrices]\label{th:univ}
For every integer $n$, let $M_n,K_n,L_n$ be $n \times n$ complex matrices such that, for some $\alpha > 0$, 
 \begin{itemize}
    \item[(i)] $ x \mapsto  x^{-\alpha}$ is uniformly bounded for
     $(\nu_{K_n})_{n\geq1}$, and  $(\nu_{L_n})_{n\geq1}$ and $ x \mapsto x^{\alpha}$ is uniformly bounded for $(\nu_{M_n})_{n\geq1}$,
    
    \item[(ii)] for almost all (a.a.) $z \in \dC$, $\nu_{K_n^{-1} M_n L_n^{-1}  - K_n^{-1} L_n^{-1}  z }$ converges weakly to a probability measure $\nu_z$.
    \end{itemize}
Then, almost surely, 
$$
\mu_{ÊM + K X L / \sqrt n}   \underset{n\to\infty}{\weak}  \mu, 
$$
where $\mu$ depends only $(\nu_z)_{z \in \dC}$. 
\end{thm}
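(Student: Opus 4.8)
The plan is to reduce Theorem \ref{th:univ} to Theorem \ref{prop:taovuuniversality} via Girko's hermitization method, following Tao--Vu. The key object is the log-potential: for a probability measure $\rho$ on $\dC$, recall $U_\rho(z) = \int \log|z-w| \, d\rho(w)$, and that a sequence $\mu_n$ converges weakly (a.s.) once $U_{\mu_n}(z)$ converges (a.s.) for a.a. $z$, \emph{provided} one also controls uniform integrability of $\log|\cdot|$ near the singularity and at infinity. Applied here, for fixed $z$ one has
\begin{equation*}
U_{\mu_{M+KXL/\sqrt n}}(z) = \frac1n \log\bigl|\det\bigl(M + KXL/\sqrt n - z\bigr)\bigr| = \frac1n\log\bigl|\det(KL/\sqrt n)\bigr| + \frac1n \log\bigl|\det\bigl(X - \sqrt n\, K^{-1}(z - M)L^{-1}\bigr)\bigr|,
\end{equation*}
using assumption (i) to guarantee $K_n, L_n$ invertible for $n$ large. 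The first term is deterministic and handled by (i); the second is $\tfrac12\int\log(x)\,d\nu_{A_n}(x)$ where $A_n = \tfrac1n(X - \sqrt n K^{-1}(z-M)L^{-1})(\cdots)^*$, i.e. it is the log-determinant of a shifted i.i.d.\ matrix.

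First I would set up the hermitization: fix $z$ outside a Lebesgue-null bad set and write the target quantity as an integral of $\log x$ against the singular-value measure $\nu_{X/\sqrt n - W_n}$ where $W_n = \sqrt n\,K_n^{-1}(z-M_n)L_n^{-1}/\sqrt n = K_n^{-1}(z-M_n)L_n^{-1}$ (up to the bookkeeping above). Second, I would invoke the i.i.d.\ singular-value universality machinery of Tao--Vu \cite{tao-vu-cirlaw-bis}: the deterministic shift $W_n$ has singular value measure controlled by (i) (the bounds on $\nu_{K_n}, \nu_{L_n}, \nu_{M_n}$ give a polynomial bound on $s_1(W_n)$ and on the number of small singular values), so $\nu_{X_n/\sqrt n - W_n}$ has the same limit as $\nu_{G_n/\sqrt n - W_n}$, and moreover $\log x$ is a.s.\ uniformly integrable for this family — this is exactly the replacement principle. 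Hence, a.s., $U_{\mu_{M+KXL/\sqrt n}}(z) - U_{\mu_{M+KGL/\sqrt n}}(z) \to 0$ for a.a.\ $z$. Third, for the Gaussian model $M + KGL/\sqrt n$: since $G$ has a rotationally invariant law, $K G L$ has the same singular values as $|K|\,G\,|L|$ type expressions, and more usefully the generalized-eigenvalue picture applies — one writes $\det(M + KGL/\sqrt n - z) = \det(KL/\sqrt n)\det(G - \sqrt n K^{-1}(z-M)L^{-1})$ and compares with a second independent Gaussian to land inside Theorem \ref{prop:taovuuniversality}; assumption (ii) is precisely what makes $U_{\mu_{M+KGL/\sqrt n}}(z)$ converge, to a limit $U(z)$ depending only on $(\nu_z)_{z\in\dC}$ via $U(z) = \tfrac12\int\log x\, d\nu_z(x) + \text{const}$. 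Finally I would assemble: $U_{\mu_n}\to U$ a.s.\ a.e., $U$ is the log-potential of a probability measure $\mu$ (this needs a mild argument that $U$ is not identically $-\infty$ and is subharmonic), and uniform integrability at $\infty$ follows from (i) (the bound on $x^\alpha$ against $\nu_{M_n}$ plus operator-norm control of $X/\sqrt n$), so $\mu_n \weak \mu$ a.s.

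The main obstacle is the uniform integrability of $\log x$ against $\nu_{X_n/\sqrt n - W_n}$ near $x = 0$ — i.e. ruling out that $X_n/\sqrt n - W_n$ has too many too-small singular values, uniformly in $n$ and for a.a.\ $z$. This is the least-singular-value / intermediate-singular-value bound, and it is where assumption (i) (in particular the uniform bound on $x^{-\alpha}$ for $\nu_{K_n}, \nu_{L_n}$, which prevents the shift from being degenerate, and the bound on $x^\alpha$ for $\nu_{M_n}$, which keeps the shift from blowing up) must be fed into the Tao--Vu small-ball and net arguments; the Gaussian case is comparatively soft because explicit density formulas are available. A secondary technical point is upgrading convergence in probability (for each $z$) to the a.s.\ statement (simultaneously for a.a.\ $z$), which is done by a Fubini/Borel--Cantelli argument exactly as in Tao--Vu and in the proof of Theorem \ref{prop:taovuuniversality}.
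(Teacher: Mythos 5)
Your high-level framework is right — Girko hermitization, decomposing the log-determinant, and identifying uniform integrability of $\ln(\cdot)$ near $0$ (the small singular values) as the real obstacle, to be fed the Tao--Vu least/intermediate singular value bounds via assumption (i). Your observation that (i) controls both $s_1(W_n)$ and the number of small singular values of the shift $W_n = K^{-1}(M-z)L^{-1}$ is also the correct use of Lemma~\ref{le:kyfan}. But the middle of your plan takes a wrong turn.

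The proposed reduction of the Gaussian case $M+KGL/\sqrt n$ to Theorem~\ref{prop:taovuuniversality} does not work: that theorem compares the generalized-eigenvalue measures $\mu_{X^{-1}Y}$ and $\mu_{G^{-1}H}$ of \emph{two independent random matrices with no deterministic shift}, and there is no algebraic way to cast $\det(G - \sqrt n\, K^{-1}(z-M)L^{-1})$ as a generalized-eigenvalue problem between two independent Gaussians. Moreover, Theorem~\ref{prop:taovuuniversality} is only a \emph{comparison} statement ($\mu_{X^{-1}Y}-\mu_{G^{-1}H}\weak 0$), not an existence-of-limit statement, so even if you could ``land inside'' it you would still need a separate convergence theorem. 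What actually supplies convergence is the Dozier--Silverstein result (Theorem~\ref{th:DS}): $\nu_{X_n/\sqrt n + M_n}$ converges a.s.\ as soon as the deterministic $\nu_{M_n}$ converges, and assumption (ii) is exactly that the shift's singular value measure converges to $\nu_z$. This works directly for the i.i.d.\ $X$, so the Gaussian comparison step in your plan is an unnecessary detour (and the subsequent step is a dead end). In other words, ``assumption (ii) makes $U$ converge'' is doing far too much work in your writeup: it needs the full strength of Theorem~\ref{th:DS}, which you never invoke.

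Also worth noting what the paper does that you miss: to put the problem in hermitization form $\mu_{M_n + K_n A_n}$ (Corollary~\ref{cor:girko2}) with only \emph{one} multiplicative deterministic factor, the paper first conjugates, $\mu_{M+KXL/\sqrt n}=\mu_{LML^{-1}+LKX/\sqrt n}$, and sets $\wt M=LML^{-1}$, $\wt K=LK$; then $\wt K^{-1}(\wt M-z)=K^{-1}ML^{-1}-K^{-1}L^{-1}z$ is precisely the matrix in (ii). After that it is just: Lemma~\ref{le:kyfan} plus (i) $\Rightarrow$ $x^{\alpha/4}$ uniformly bounded for $\nu_{\wt K^{-1}(\wt M-z)}$; Theorem~\ref{th:UI} $\Rightarrow$ uniform integrability of $\ln(\cdot)$ for $\nu_{X/\sqrt n + \wt K^{-1}(\wt M-z)}$; Theorem~\ref{th:DS} plus (ii) $\Rightarrow$ weak convergence of that singular value measure; Corollary~\ref{cor:girko2} (which also needs $\ln(\cdot)$ bounded for $\nu_{\wt K}$, again from (i) and Lemma~\ref{le:kyfan}) then yields a.s.\ convergence of $\mu_{\wt M+\wt K X/\sqrt n}$ to a $\mu$ determined by $(\nu_z)$. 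No Gaussian comparison, and no appeal to Theorem~\ref{prop:taovuuniversality}, is involved.
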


For $M = K = L = I$, the identity matrix, this statement gives the famous circular law theorem, that was established through a long sequence of partial results \cite{MR0220494,MR773436,MR2130247,MR841088,MR1437734,MR1454343,MR1428519,MR2191234,bai-silverstein-book,1687963,gotze-tikhomirov-new,MR2409368,tao-vu-cirlaw-bis}. In this note, the steps of proof are elementary and they borrow all difficult technical statements from previously known results. Nevertheless, this theorem generalizes Theorem 1.18 in  \cite{tao-vu-cirlaw-bis} in two directions.  First, we have removed the assumption of  uniformly bounded second moment for $\nu_{ M + K X L / \sqrt n }$, $\nu_{K^{-1} M L^{-1}}$ and  $\nu_{K^{-1} L^{-1}}$. Secondly, it proves the convergence of the spectral measure. The explicit form of $\mu$ in terms of $\nu_z$ is quite complicated. It is given by the forthcoming equations (\ref{eq:defmu}-\ref{eq:resDS}). This expression is not very easy to handle. However, following ideas developed in \cite{R} or using tools from free probability as in \cite{MR1929504,GMZ},  it should be possible to find more elegant formulas. For nice examples of limit spectral distributions, see e.g. \cite{R}. It is interesting to notice that we may deal with non-centered variables $(X_{ij})$ by including the average matrix of $X/ \sqrt n $ into $M$, and recover \cite{djalil-nccl}. Finally, as in \cite{GT}, it is also possible by induction to apply  Theorem \ref{th:univ} to product  of independent copies of $X$ (with the use of the forthcoming Theorem \ref{th:UI}).

\section{Proof of Theorem \ref{prop:taovuuniversality}}

\subsection{Replacement Principle}

The following is an extension of Theorem 2.1 in \cite{tao-vu-cirlaw-bis}.  The idea goes back essentially to Girko.

\begin{lem}[Replacement principle]\label{le:girko}
  Let $A_n, B_n$ be $n \times n$ complex random matrices. Suppose that for a.a.\ $z\in\dC$, a.s.\
  \begin{itemize}
  \item[(i)] $\nu_{A_n-z} - \nu_{B_n-z}$ tends weakly to $0$, 
  \item[(ii)] $\ln(\cdot)$ is uniformly integrable for
    $(\nu_{A_n-z})_{n\geq1}$ and  $(\nu_{B_n-z})_{n\geq1}$.
  \end{itemize}
 Then a.s. $\mu_{A_n} - \mu_{B_n}$ tends weakly to $0$. Moreover the same holds if we replace (i) by
    \begin{itemize}
  \item[(i')] $\int \ln (x) d \nu_{A_n-z} - \int \ln (x) d  \nu_{B_n-z}$ tends to $0$. 
    \end{itemize}
  \end{lem}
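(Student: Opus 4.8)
The starting point is Girko's Hermitization identity: for any $n\times n$ complex matrix $A$ with no eigenvalue on the real line through... more precisely, for Lebesgue-a.e. $z$, and any smooth compactly supported test function, the logarithmic potential
$$
U_{\mu_A}(z) = -\int_{\dC} \ln|w-z|\, d\mu_A(w) = -\frac1n \ln|\det(A-z)| = -\int_0^\infty \ln x\, d\nu_{A-z}(x),
$$
together with the fact that $\mu_A = \frac{1}{2\pi}\Delta U_{\mu_A}$ in the sense of distributions (here $\Delta$ is the distributional Laplacian on $\dC\simeq\dR^2$). So the plan is: (1) express everything through logarithmic potentials; (2) show the logarithmic potentials of $\mu_{A_n}$ and $\mu_{B_n}$ have the same a.s. limit (as locally integrable functions, or after testing against a fixed smooth function); (3) apply the distributional Laplacian, which is continuous, to transfer this to weak convergence of $\mu_{A_n} - \mu_{B_n}$ to $0$.

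\textbf{Step 1 (reduction to test functions).} Fix $\varphi\in C_c^\infty(\dC)$. It suffices to prove that a.s. $\int\varphi\, d\mu_{A_n} - \int\varphi\, d\mu_{B_n}\to 0$ for a countable dense (in an appropriate sense) family of such $\varphi$, and then upgrade to all bounded continuous $f$ by a standard tightness/approximation argument — note $\mu_{A_n},\mu_{B_n}$ are probability measures so no mass escapes, hence weak-$*$ convergence of the difference against $C_c^\infty$ test functions plus total-mass control gives the claim. By the Hermitization identity and two integrations by parts,
$$
\int \varphi\, d\mu_{A_n} = \frac{1}{2\pi}\int_{\dC} \Delta\varphi(z)\, U_{\mu_{A_n}}(z)\, dz = -\frac{1}{2\pi}\int_{\dC}\Delta\varphi(z)\left(\int_0^\infty \ln x\, d\nu_{A_n-z}(x)\right)dz,
$$
and similarly for $B_n$. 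Hence it is enough to show that a.s.
$$
\int_{\dC} \Delta\varphi(z)\left(\int \ln x\, d\nu_{A_n-z} - \int \ln x\, d\nu_{B_n-z}\right)dz \longrightarrow 0.
$$

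\textbf{Step 2 (pointwise convergence of the inner integral).} For a.e. fixed $z$, hypothesis (i) gives $\nu_{A_n-z}-\nu_{B_n-z}\weak 0$ a.s., and hypothesis (ii) gives uniform integrability of $\ln(\cdot)$ for both sequences; these two together imply $\int\ln x\, d\nu_{A_n-z} - \int\ln x\, d\nu_{B_n-z}\to 0$ a.s. (weak convergence plus uniform integrability upgrades to convergence of the integral of the unbounded function $\ln$; one truncates $\ln$ at large and small scales, uses the UI to control the tails uniformly in $n$, and the bounded part by weak convergence — the subtlety being that $\ln$ is unbounded both near $0$ and near $\infty$, handled by the two-sided truncation $\ln x = \ln(x\vee t^{-1}) + \ln(x\wedge t^{-1})$ type splitting, the small-$x$ part being exactly the integrability statement against which (ii) is formulated for $1/x^\alpha$-type bounds elsewhere, but here simply the stated UI of $\ln$). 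Under the alternative hypothesis (i'), this pointwise statement is given directly. So for a.e. $z$, the integrand in Step 1 tends to $0$ a.s.

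\textbf{Step 3 (Fubini / dominated convergence in $z$).} It remains to exchange the limit with $\int_{\dC}\Delta\varphi(z)\,dz$. Here is the main obstacle: there is no uniform (in $n$, or deterministic) domination of $z\mapsto \int\ln x\, d\nu_{A_n-z}$ that is obvious, because $\ln$ blows up at $0$ when $A_n-z$ has a small singular value, and $\ln$ blows up at $\infty$ on the growing-norm side. One controls the $+\infty$ side cheaply: $\int (\ln x)_+\, d\nu_{A_n-z}\le \frac12\ln\|(A_n-z)(A_n-z)^*\|_{\mathrm{op}}\cdot(\text{const})$... more simply $\int(\ln x)_+\,d\nu_{A_n-z}\le \frac1n\sum\ln(1+s_k)\le \ldots$, which on the compact support of $\Delta\varphi$ is bounded once $\frac1n\|A_n\|_{\mathrm{HS}}^2$ is controlled — but we are not assuming that. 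The right move, following Tao–Vu, is to not seek deterministic domination but instead use that the hypotheses hold \emph{a.s. for a.e. $z$}, combine with a Fubini argument on the product space $(\Omega,\dC,dz\otimes\dP)$ to get that the integrand $\to 0$ for $(dz\otimes\dP)$-a.e. $(z,\omega)$ on $\supp\Delta\varphi\times\Omega$, and then apply dominated convergence in $z$ for a.e. fixed $\omega$ — where the domination needed is only an $\omega$-dependent, $n$-independent bound. That bound comes from the uniform integrability hypothesis (ii) itself, which controls $\sup_n\int|\ln x|\,d\nu_{A_n-z}$ up to an additive constant only if strengthened; the honest route, which I would follow, is the one in \cite{tao-vu-cirlaw-bis}: pass to a subsequence, use that along a.e. $z$ the sequence $\int\ln x\, d\nu_{A_n-z}$ is a.s. bounded (a consequence of UI plus the fact that $\nu_{A_n-z}$ are probability measures, giving $\int|\ln x|\,d\nu_{A_n-z}\le C_z(\omega)<\infty$), and invoke the generalized dominated convergence theorem where the dominating functions $g_n(z)=|\int\ln x\,d\nu_{A_n-z}|$ themselves converge in $L^1_{loc}(dz)$ — this last point is exactly where the argument is delicate and where I expect to spend the real effort, handling the local integrability of $z\mapsto \ln|\det(A_n-z)|/n$ uniformly enough via the layer-cake formula and hypothesis (ii). Once Step 3 is done, combining Steps 1–3 yields a.s. $\int\varphi\,d\mu_{A_n}-\int\varphi\,d\mu_{B_n}\to 0$ for each fixed $\varphi\in C_c^\infty$; a diagonal argument over a countable dense family and the probability-measure normalization then give a.s. $\mu_{A_n}-\mu_{B_n}\weak 0$, and the modification for (i') is immediate since Step 2 is the only place (i) was used. $\qed$
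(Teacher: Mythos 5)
Your proposal follows the Girko hermitization route via logarithmic potentials and the distributional Laplacian, which is exactly what the paper's one-line proof defers to (``a straightforward adaptation of \cite[Lemma~A.2]{cirmar}''), so the approach is aligned. However, there is a concrete error at the end of Step~1 and an unresolved step in Step~3. The assertion that ``$\mu_{A_n},\mu_{B_n}$ are probability measures so no mass escapes'' and that $C_c^\infty$ testing plus ``total-mass control'' suffices is false: take $\mu_{A_n}=\delta_{a_n}$, $\mu_{B_n}=\delta_{b_n}$ with $a_n,b_n\to\infty$; the difference vanishes against every compactly supported $\varphi$ but need not vanish against all bounded continuous $f$. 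What is actually required is tightness of $(\mu_{A_n})$ and $(\mu_{B_n})$, and it must be derived from the hypotheses: uniform integrability of $\ln(\cdot)$ at $+\infty$ for $(\nu_{A_n-z})$ at any fixed $z$, together with the Weyl log-majorization $\sum_i\ln_+|\lambda_i(A)|\le\sum_i\ln_+ s_i(A)$, gives $\sup_n\int\ln_+|w|\,d\mu_{A_n}(w)<\infty$ and hence tightness by Markov's inequality. This is a genuine ingredient of the cited Lemma~A.2, not bookkeeping, and it is missing from your sketch.

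For Step~3 you correctly flag that the interchange of the $n$-limit with $\int_\dC\Delta\varphi(z)\,dz$ is the crux, but your plan only names the target (``generalized dominated convergence where the dominating functions converge in $L^1_{\mathrm{loc}}$'') rather than reaching it. The route taken in \cite[Lemma~A.2]{cirmar} starts from the deterministic estimate $\int_{|z|\le R}\bigl|\int\ln|z-w|\,d\rho(w)\bigr|\,dz\le C_R\bigl(1+\int\ln_+|w|\,d\rho(w)\bigr)$, valid for any probability measure $\rho$, which with the tightness bound above gives a uniform $L^1_{\mathrm{loc}}$ bound on $U_{\mu_{A_n}}$ and $U_{\mu_{B_n}}$; one then upgrades boundedness to local uniform integrability by splitting the kernel $\ln|z-w|$ near its singularity and at infinity, obtaining $U_{\mu_{A_n}}-U_{\mu_{B_n}}\to 0$ in $L^1_{\mathrm{loc}}$, after which the distributional Laplacian can be applied. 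A complete proof has to carry this out. One small further point: the paper's conventions are $U_\mu(z)=\int\ln|z-z'|\,d\mu(z')$ with $\Delta=\tfrac14(\partial_x^2+\partial_y^2)$, giving $\Delta U_\mu=\pi\mu$ as in \eqref{eq:lap}; your version has the opposite sign and a $1/(2\pi)$ factor, which should be reconciled with the paper.
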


\begin{proof}
It is a straightforward adaptation of \cite[Lemma A.2]{cirmar}. 
\end{proof}

\begin{cor}\label{cor:girko}
Let $A_n, B_n , M_n$ be $n \times n$ complex random matrices. Suppose that a.s. $M_n$ is invertible and for a.a.\ $z\in\dC$, a.s.\
   \begin{itemize}
  \item[(i)] $\nu_{A_n-z M_n ^{-1}} - \nu_{B_n-z M_n^{-1}}$ tends weakly to $0$, 
  \item[(ii)] $\ln(\cdot)$ is uniformly integrable for
    $(\nu_{A_n-z M_n^{-1} })_{n\geq1}$ and  $(\nu_{B_n-z M_n^{-1} })_{n\geq1}$.
    \end{itemize}
 Then a.s. $\mu_{M_n A_n} - \mu_{M_n B_n}$ tends weakly to $0$. 
  \end{cor}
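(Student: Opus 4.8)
The plan is to apply Lemma~\ref{le:girko}, in the variant with hypothesis (i'), to the pair of matrices $M_nA_n$ and $M_nB_n$. The bridge between the two statements is the identity $M_nA_n-zI=M_n(A_n-zM_n^{-1})$, valid a.s.\ for every $z\in\dC$ since $M_n$ is a.s.\ invertible. Taking determinants and then $\frac1n\log|\cdot|$ yields, a.s.\ for a.a.\ $z$,
\[
\int\ln(x)\,d\nu_{M_nA_n-z}=\frac1n\log|\det M_n|+\int\ln(x)\,d\nu_{A_n-zM_n^{-1}},
\]
and the same identity with $B_n$ in place of $A_n$. The key point is that the term $\frac1n\log|\det M_n|$ is the same whether we use $A_n$ or $B_n$, so it cancels in the difference:
\[
\int\ln(x)\,d\nu_{M_nA_n-z}-\int\ln(x)\,d\nu_{M_nB_n-z}=\int\ln(x)\,d\nu_{A_n-zM_n^{-1}}-\int\ln(x)\,d\nu_{B_n-zM_n^{-1}}.
\]

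First I would check condition (i') of Lemma~\ref{le:girko} for $(M_nA_n,M_nB_n)$, i.e.\ that the left-hand side above tends to $0$. By hypothesis (i), for a.a.\ $z$ we have a.s.\ $\nu_{A_n-zM_n^{-1}}-\nu_{B_n-zM_n^{-1}}\weak 0$, and by hypothesis (ii) the function $\ln$ is uniformly integrable for $(\nu_{A_n-zM_n^{-1}})_{n\geq1}$ and $(\nu_{B_n-zM_n^{-1}})_{n\geq1}$. The elementary fact that weak convergence of a difference of probability measures to $0$, combined with uniform integrability of a continuous (possibly unbounded) function $f$ for both sequences, forces $\int f\,d\mu_n-\int f\,d\mu'_n\to 0$ — truncate $f$ at level $t$, handle the bounded part by weak convergence and the tail by uniform integrability, then let $t\to\infty$ — gives exactly (i') for the right-hand side.

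Next I would check condition (ii) of Lemma~\ref{le:girko}, namely that $\ln$ is uniformly integrable for $(\nu_{M_nA_n-z})_{n\geq1}$ and $(\nu_{M_nB_n-z})_{n\geq1}$. Writing $C=A_n-zM_n^{-1}$, the Gel'fand--Naimark log-majorisation inequalities for singular values of a product — $\prod_{j\leq k}s_j(M_nC)\leq\prod_{j\leq k}s_j(M_n)\,s_j(C)$ for every $k$, with equality at $k=n$ — applied to the convex function $|\cdot|$, give
\[
\int|\ln(x)|\,d\nu_{M_nA_n-z}\leq\int|\ln(x)|\,d\nu_{M_n}+\int|\ln(x)|\,d\nu_{A_n-zM_n^{-1}},
\]
and the analogous one-sided bounds on the truncations $\ln^+$ and $\ln^-$ transfer the uniform integrability from $\nu_{A_n-zM_n^{-1}}$ to $\nu_{M_nA_n-z}$ once $\nu_{M_n}$ is under control. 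With (i') and (ii) in hand, Lemma~\ref{le:girko} yields a.s.\ $\mu_{M_nA_n}-\mu_{M_nB_n}\weak 0$, which is the assertion, $\mu_{M_nA_n}$ being the empirical measure of the eigenvalues of $M_nA_n$.

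The step I expect to be the main obstacle is precisely this last verification of (ii): nothing is assumed about the conditioning of $M_n$, so the majorisation bound above is useful only once one also knows that $\ln$ is uniformly integrable for $(\nu_{M_n})_{n\geq1}$ (equivalently for $(\nu_{M_n^{-1}})_{n\geq1}$). In the intended applications, where $M_n$ is a fixed multiple of $X_n^{-1}$, this is itself a consequence of the standard least-singular-value and bulk singular-value estimates for i.i.d.\ matrices; in the general statement it is the natural extra ingredient, to be imposed or extracted from (i)--(ii). Everything else — the determinant factorisation, the cancellation of $\frac1n\log|\det M_n|$, and the truncation arguments — is routine.
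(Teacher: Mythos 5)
Your proof follows the same route as the paper: factor $M_nA_n-z=M_n(A_n-zM_n^{-1})$, take $\frac1n\ln|\det(\cdot)|$, note that the term $\frac1n\ln|\det M_n|$ is common to the two matrices and hence cancels in the difference of logarithmic integrals, and invoke Lemma~\ref{le:girko} in the (i')--(ii) form. The passage from hypotheses (i)+(ii) of the corollary to (i') for the pair $(M_nA_n,M_nB_n)$ via a truncation argument is exactly the intended mechanism.

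Where you go beyond the paper is in your careful treatment of condition (ii) of Lemma~\ref{le:girko}. The paper simply writes ``we may thus apply Lemma~\ref{le:girko}(i')--(ii)'' and does not say a word about why $\ln(\cdot)$ should be uniformly integrable for $(\nu_{M_nA_n-z})_{n\geq1}$ and $(\nu_{M_nB_n-z})_{n\geq1}$. Your attempt to supply this via the Gel'fand--Naimark/Horn log-majorisation
\[
\int|\ln(x)|\,d\nu_{M_nA_n-z}\leq\int|\ln(x)|\,d\nu_{M_n}+\int|\ln(x)|\,d\nu_{A_n-zM_n^{-1}}
\]
is the right idea, and your diagnosis is correct: it only closes the gap if one also controls $\ln(\cdot)$ for $(\nu_{M_n})_{n\geq1}$, and nothing in the stated hypotheses of the corollary gives this. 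Notice that in the companion Corollary~\ref{cor:girko2} the author does explicitly add the hypothesis that $\ln(\cdot)$ is uniformly bounded for $(\nu_{K_n})_{n\geq1}$, precisely so that $\frac1n\ln|\det K_n|$ is under control; there is no analogous hypothesis in Corollary~\ref{cor:girko}, so the omission you found is a real one. It is harmless in the concrete application, since when the corollary is invoked in the proof of Theorem~\ref{prop:taovuuniversality} one has $M_n$ essentially a multiple of $(X_n/\sqrt n)^{-1}$ and Theorem~\ref{th:UI} supplies the bound $x\mapsto x^{-\beta}+x^{\beta}$ uniformly bounded for $(\nu_{X_n/\sqrt n})_{n\geq1}$, which gives what you need; but as a freestanding statement the corollary is missing an assumption on $M_n$ and your analysis makes that explicit. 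The one point you could make cleaner is the claim that (i')+(ii) alone dispose of the matter: (ii) is not only used to upgrade (i) to (i'), it is also used inside the hermitization argument to obtain a local dominating function for the potentials and tightness of the spectral measures, and it is precisely there that the control of $\nu_{M_n}$ cannot be dispensed with.
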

  \begin{proof}
If $M_n$ is invertible, note that 
$$\int \ln (x) d \nu_{ M_n A_n -z } = \frac 1 n \ln Ê|Ê\det  ( M_n A_n - z )  |  = \int \ln (x) d \nu_{  A_n - z M^{-1}_n } + \frac 1 n \ln |Ê\det M_n |Ê.$$ 
We may thus apply Lemma \ref{le:girko}(i')-(ii). Indeed, in the expression $\int \ln (x) d \nu_{A_n-z} - \int \ln (x) d  \nu_{B_n-z}$, the term $\frac 1 n \ln |Ê\det M_n |Ê$ cancels. 
\end{proof}

\subsection{Convergence of singular values}

The following result is due to Dozier and Silverstein.
\begin{thm}[Convergence of singular values, \cite{DS}]\label{th:DS}
Let $(M_n)_{n \geq 1}$ be a sequence of $n \times n$ complex matrices such that $\nu_{M_n} $ converges weakly to a probability measure $\nu$. Then a.s. $\nu_{X_n / \sqrt n  + M_n}$ converges weakly to a probability measure $\rho$ which depends only on $\nu$.
\end{thm}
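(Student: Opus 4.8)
The plan is to reduce the statement to a Stieltjes-transform assertion about the squared model and then run the standard resolvent machinery. Since $x\mapsto\sqrt x$ is a homeomorphism of $\dR_+$, weak convergence of $\nu_{X_n/\sqrt n+M_n}$ is equivalent to that of $\wt\nu_n$, the empirical spectral distribution of the positive Hermitian matrix $S_n:=(X_n/\sqrt n+M_n)(X_n/\sqrt n+M_n)^*$, and $\nu_{M_n}\weak\nu$ is equivalent to the empirical spectral distribution of $M_nM_n^*$ converging weakly to the pushforward $H$ of $\nu$ under $x\mapsto x^2$. So it suffices to prove that a.s., for every $z$ in the upper half-plane $\dC^+=\{z\in\dC:\Im z>0\}$, the Stieltjes transform $s_n(z)=\int(x-z)^{-1}\,d\wt\nu_n(x)=\tfrac1n\tr(S_n-z)^{-1}$ converges to a deterministic limit $s(z)$ depending only on $H$, since this is equivalent to $\wt\nu_n\weak\wt\rho$, where $\wt\rho$ is the probability measure with Stieltjes transform $s$ and $\rho$ is then its image under $\sqrt{\cdot}$. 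I would first truncate and recentre the entries of $X_n$ --- which perturbs $\wt\nu_n$ negligibly, via the Hoffman--Wielandt bound $\sum_i(s_i(A)-s_i(B))^2\leq\NRM{A-B}_{\mathrm{HS}}^2$ and $\dE|X_{11}|^2<\infty$ --- so that we may assume $|X_{ij}|\leq\delta_n\sqrt n$ with $\delta_n\to0$.

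Next I would establish concentration. Write $S_n=\sum_{k=1}^n c_kc_k^*$, where $c_k=n^{-1/2}x^{(k)}+m^{(k)}$ is the $k$-th column of $X_n/\sqrt n+M_n$, so the $c_k$ are independent. A rank-one perturbation of a Hermitian matrix changes $\tr(\cdot-z)^{-1}$ by at most $1/\Im z$; hence the Azuma/Burkholder inequality applied to the martingale obtained by revealing $c_1,\dots,c_n$ successively gives $\mathrm{Var}\,s_n(z)=O\big(1/(n(\Im z)^2)\big)$ and, by Borel--Cantelli, $s_n(z)-\dE s_n(z)\to0$ a.s. for each fixed $z\in\dC^+$ (and likewise for $\tfrac1n\tr\big(B(S_n-z)^{-1}\big)$ with $B$ bounded).

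The heart of the argument, and the expected main obstacle, is to identify $\lim\dE s_n(z)$. With $S_n^{(k)}:=S_n-c_kc_k^*$, the Sherman--Morrison identity gives
$$
(S_n-z)^{-1}=(S_n^{(k)}-z)^{-1}-\frac{(S_n^{(k)}-z)^{-1}c_kc_k^*(S_n^{(k)}-z)^{-1}}{1+c_k^*(S_n^{(k)}-z)^{-1}c_k}.
$$
Since $c_k$ is independent of $S_n^{(k)}$ with mean $m^{(k)}$ and covariance $n^{-1}I$, the quadratic-form concentration estimate yields $c_k^*Ac_k=\tfrac1n\tr A+(m^{(k)})^*Am^{(k)}+o(1)$ for $A$ a resolvent of $S_n^{(k)}$, the errors being controllable in $L^1$ thanks to the truncation. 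Substituting, taking normalized traces and summing over $k$, one shows that $(S_n-z)^{-1}$ is well approximated by the deterministic matrix $\big(\tfrac{1}{1+s_n(z)}M_nM_n^*-z\,(1+s_n(z))\big)^{-1}$, so that any subsequential limit $s=s(z)$ of $\dE s_n(z)$ satisfies the Dozier--Silverstein fixed-point equation
$$
s(z)=\int_0^\infty\frac{dH(t)}{\dfrac{t}{1+s(z)}-\big(1+s(z)\big)z},\qquad z\in\dC^+.
$$
(In the rectangular case one would carry along the companion transform of $S_n^*S_n$; since $S_n$ is square here, a single equation suffices.) I would then check that for each $z\in\dC^+$ this equation has a unique solution in the relevant class of Stieltjes transforms --- e.g. by a standard contraction argument in the hyperbolic metric of $\dC^+$ --- and that this solution is the Stieltjes transform of a genuine probability measure $\wt\rho$ on $\dR_+$ depending only on $H$; hence $\dE s_n(z)\to s(z)$.

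Finally, combining the two preceding steps, a.s. $s_n(z)\to s(z)$ simultaneously for all $z$ in a fixed countable dense subset of $\dC^+$; since the $s_n$ are analytic and locally uniformly bounded on $\dC^+$, Vitali's theorem upgrades this to locally uniform convergence on all of $\dC^+$, a.s. Therefore $\wt\nu_n\weak\wt\rho$ a.s., and pushing forward by $x\mapsto\sqrt x$ gives $\nu_{X_n/\sqrt n+M_n}\weak\rho$ a.s., with $\rho$ depending only on $\nu$. The difficult part is the self-consistent-equation step: because the columns have \emph{differing} means $m^{(k)}$ one cannot guess a scalar equation by symmetry, so one must prove the deterministic-equivalent approximation for the whole resolvent with errors uniform in $k$ --- which, only a second moment being assumed, forces the truncation and an $L^1$-level control of the quadratic forms --- and then prove uniqueness for the limiting equation. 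By contrast the concentration step and the passage from Stieltjes transforms to weak convergence are routine, and since we work inside $\dC^+$ no edge or stability analysis near $\dR$ is needed.
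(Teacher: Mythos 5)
The paper offers no proof here: Theorem \ref{th:DS} is quoted verbatim from Dozier and Silverstein \cite{DS} and used as a black box, so there is nothing internal to compare your argument against. Your sketch is a reconstruction of the argument in \cite{DS} along the standard resolvent route: pass to the Hermitian model $S_n=(X_n/\sqrt n+M_n)(X_n/\sqrt n+M_n)^*$, truncate and recentre, show $s_n(z)-\dE s_n(z)\to0$ a.s.\ via a martingale bound using the $1/\Im z$ rank-one sensitivity of $\tr(S_n-z)^{-1}$, extract the self-consistent equation via Sherman--Morrison, prove uniqueness of the fixed point, and globalize by Vitali. The equation you write down is the $c=1$, $\sigma=1$ specialization of the Dozier--Silverstein equation, so the target is the right one, and you correctly single out the genuine obstacle: the column means $m^{(k)}$ differ, so a deterministic-equivalent approximation of the full resolvent, with errors uniform in $k$, is required rather than a symmetry-based scalar equation. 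Two caveats deserve a flag. (a) The theorem posits only weak convergence of $\nu_{M_n}$, with no moment bound, so one must also truncate the large singular values of $M_n$ before the resolvent analysis; otherwise the norms $\|m^{(k)}\|$ appearing in your quadratic-form expansion are uncontrolled. (b) The cross terms $n^{-1/2}(x^{(k)})^*Am^{(k)}$ you fold into a single $o(1)$ are only mean-zero, with $L^2$ size of order $\|m^{(k)}\|/\sqrt{n}$; making this $o(1)$ uniformly over $k$ is precisely where \cite{DS} does its real work. You acknowledge this difficulty without closing it, which is acceptable for a blind reconstruction of an imported result; the paper itself simply buys brevity by citing \cite{DS} and concentrating its effort on the new ingredients (uniform integrability of $\ln(\cdot)$ and the replacement principle), whereas your sketch would render that dependence self-contained.
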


The measure $\rho$ has an explicit characterization in terms of $\nu$. Its exact form is not relevant here.

\subsection{Uniform integrability} In order to use the replacement principle, it is necessary to prove the uniform integrability of $\ln( \cdot  )$ for some empirical singular values measures. This is achieved by proving that,  for some $\beta >0$, $x \mapsto x^{-\beta} + x^\beta$ is uniformly bounded. 

\begin{thm}[Uniform integrability]\label{th:UI}
Let $(M_n)_{n \geq 1}$ be a sequence of $n \times n$ complex matrices, and assume that $x \mapsto x^\alpha$ is uniformly bounded for $(\nu_{M_n})_{n \geq 1}$ for some $\alpha >0$. Then there exists $\beta >0$  such that a.s. $x  \mapsto x^{-\beta} + x^\beta$ is uniformly bounded for $(\nu_{X_n / \sqrt n +  M_n})_{ n \geq 1}$. 
\end{thm}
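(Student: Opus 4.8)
The plan is to split the statement into its two halves — the upper bound (uniform boundedness of $x\mapsto x^\beta$) and the lower bound (uniform boundedness of $x\mapsto x^{-\beta}$) — since the two require quite different tools. For the upper bound, I would use the elementary inequality $\frac1n\sum_k s_k(A)^2 = \frac1n\tr(AA^*) = \frac1n\sum_{i,j}|A_{ij}|^2$ together with the crude bound that relates higher moments of singular values to the Frobenius norm. Writing $A_n = X_n/\sqrt n + M_n$, one has $\int x^2\,d\nu_{A_n} \leq 2\int x^2\,d\nu_{X_n/\sqrt n} + 2\int x^2\,d\nu_{M_n} = \frac{2}{n^2}\sum_{i,j}|X_{ij}|^2 + 2\int x^2\,d\nu_{M_n}$, and the first term converges a.s.\ to $2$ by the strong law of large numbers, while the second is bounded by hypothesis (with $\alpha=2$; for general $\alpha$, one interpolates, or simply notes that uniform boundedness of $x^\alpha$ for one $\alpha>0$ gives uniform boundedness of $x^{\beta}$ for all $0<\beta\leq\alpha$ by Jensen, so WLOG $\alpha\leq 2$ and then use a small enough $\beta\le\alpha$ on the $M_n$ side while still controlling $X_n/\sqrt n$). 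So take $\beta\le\min(\alpha,2)$; the upper half is then routine.

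The lower bound is the real content. Here $\int x^{-\beta}\,d\nu_{A_n} = \frac1n\sum_k s_k(A_n)^{-\beta}$, which blows up exactly when $A_n$ has many small singular values; equivalently one must control $\frac1n\#\{k : s_k(A_n)\leq t\}$ for small $t$, i.e.\ the number of singular values of $X_n/\sqrt n + M_n$ near zero. This is precisely the type of estimate established by Tao–Vu in their work on the circular law, and I would invoke their least-singular-value and intermediate-singular-value bounds: there exist constants $c>0$ and $\gamma>0$ such that, outside an event of probability summable in $n$ (hence a.s.\ eventually excluded, via Borel–Cantelli), one has $s_{n-j}(X_n/\sqrt n + M_n) \geq c\, j\,n^{-\gamma}/n$ roughly, or more usefully a bound of the form $\#\{k : s_k \leq t\} \leq C n t^{c}$ for $t \geq n^{-B}$, combined with the operator-norm tail bound on $X_n$ controlling the regime $t\leq n^{-B}$. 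Feeding such a bound into $\frac1n\sum_k s_k^{-\beta} = \beta\int_0^\infty t^{-\beta-1}\,\frac{\#\{k:s_k\leq t\}}{n}\,dt$ and choosing $\beta$ small relative to the exponent $c$ makes the integral converge uniformly in $n$, a.s.

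The main obstacle is the lower bound, and within it the control of small singular values down to exponentially small scales $t \sim e^{-cn}$: the polynomial-in-$n^{-1}$ estimates only handle $t \geq n^{-B}$, and below that one needs that $\det(A_n)$ (equivalently the product of all singular values) is not super-exponentially small, which in turn rests on anti-concentration for the i.i.d.\ entries (a Littlewood–Offord / Esseen-type input) — this is exactly where the ``weaker assumptions'' in the abstract must be verified to still suffice, and where one should cite the relevant Tao–Vu lemmas (e.g.\ \cite{MR2409368,tao-vu-cirlaw-bis}) rather than reprove them. One subtlety worth flagging: the needed estimates in \cite{tao-vu-cirlaw-bis} are typically stated for deterministic shift matrices $M_n$ of bounded operator norm, so a truncation argument may be required to reduce the general $M_n$ (merely controlled in a weak moment sense) to the bounded-norm case, splitting off the part of $M_n$ supported on its few largest singular values and absorbing it by a rank/perturbation inequality for singular values.
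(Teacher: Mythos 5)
Your proposal follows the paper's approach almost exactly: upper half via the strong law of large numbers together with the Weyl-type inequality $s_{2i}(M+N) \leq s_i(M) + s_i(N)$ (this is Corollary~\ref{cor:large}), lower half by splitting $\frac1n\sum_k s_k^{-\beta}$ according to Tao--Vu's least and intermediate singular value bounds (Theorem~\ref{le:small}) and taking $\beta$ small relative to the constants there. Two of the ``obstacles'' you flag at the end are, however, not obstacles in this setting, and noticing this is what makes the paper's argument short. First, there is no need to control exponentially small scales $t\sim e^{-cn}$: the Tao--Vu least singular value bound (Theorem 2.1 of \cite{MR2409368}, recalled as the first part of Theorem~\ref{le:small}) already gives $s_n(X_n/\sqrt n + M_n)\geq n^{-c_1}$ a.s.\ for $n\gg 1$, a polynomial lower bound, so the range $t<n^{-c_1}$ contributes nothing. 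Second, no truncation of $M_n$ to a bounded-norm matrix is needed: the hypothesis $\limsup_n\int x^\alpha\, d\nu_{M_n}<\infty$ already forces $\|M_n\|=s_1(M_n)\leq (Cn)^{1/\alpha}$, i.e.\ a polynomial operator norm bound, and that is precisely the regularity the Tao--Vu estimates require; this is why Theorem~\ref{le:small} can be stated directly under the moment hypothesis. Once both points are in hand, the lower-bound estimate is the one-liner $\frac1n\sum_{i=1}^n s_i^{-\beta} \leq n^{\beta c_1-\gamma} + \frac{c_2}{n}\sum_{i=1}^n (n/i)^\beta$, which is uniformly bounded for $0<\beta<\min(\gamma/c_1,1)$.
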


In the remainder of the paper, the notation $n\gg1$ means \emph{large enough $n$}. We start with an elementary lemma.

 \begin{lem}[Large singular values]\label{le:large}
Almost surely, for $n \gg 1$,  
$$\int x^{2} d \nu_{X / \sqrt n} \leq 2 .$$ 
 \end{lem}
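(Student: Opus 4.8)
The plan is to recognize that $\int x^2\,d\nu_{X/\sqrt n} = \frac1n\sum_{k=1}^n s_k(X/\sqrt n)^2 = \frac1n\tr\!\big((X/\sqrt n)(X/\sqrt n)^*\big) = \frac1{n^2}\sum_{i,j=1}^n |X_{ij}|^2$, so that the statement is purely a law of large numbers assertion about the i.i.d.\ nonnegative random variables $|X_{ij}|^2$, which have mean $\mathbb E|X_{11}|^2 = 1$ since $X_{11}$ has unit variance (and mean zero). Thus what must be shown is that, almost surely, $\frac1{n^2}\sum_{1\le i,j\le n}|X_{ij}|^2 \le 2$ for all large $n$.

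First I would set $Z_{ij} = |X_{ij}|^2$, a family of i.i.d.\ nonnegative variables with $\mathbb E Z_{11} = 1$, and let $S_m = \sum_{k=1}^m Z_k$ for an arbitrary enumeration. The quantity of interest is $S_{n^2}/n^2$. Since $n^2 \to \infty$ along the subsequence of perfect squares, the strong law of large numbers gives $S_{n^2}/n^2 \to 1$ almost surely as $n \to \infty$. Consequently, almost surely there exists a (random) $n_0$ such that $S_{n^2}/n^2 \le 2$ for all $n \ge n_0$, i.e.\ $\int x^2\,d\nu_{X/\sqrt n} \le 2$ for $n \gg 1$. One subtlety: $\mathbb E|X_{11}|^2 = 1$ is exactly the unit-variance hypothesis, and the strong law applies since $Z_{11} \in L^1$; no higher moments are needed. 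If one prefers to avoid invoking the SLLN along a subsequence of a single infinite array, one can instead note that the doubly-indexed array $(Z_{ij})$ can be read off the infinite i.i.d.\ family $(X_{ij})_{i,j\ge1}$ fixed at the outset, and apply the SLLN to the partial sums over the nested squares $\{1,\dots,n\}^2$.

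There is essentially no obstacle here; the only thing to be mildly careful about is that the empirical singular-value measure and its second moment are defined via $s_k(X/\sqrt n)$, so one should record explicitly the identity $\sum_k s_k(A)^2 = \tr(AA^*) = \sum_{i,j}|A_{ij}|^2$ before reducing to the law of large numbers. Once that reduction is in place the conclusion is immediate, with the constant $2$ being any fixed number strictly greater than $\mathbb E|X_{11}|^2 = 1$.
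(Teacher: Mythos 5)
Your proposal is correct and uses exactly the same argument as the paper: reduce $\int x^2\,d\nu_{X/\sqrt n}$ to $\frac{1}{n^2}\operatorname{tr}(X^*X)=\frac{1}{n^2}\sum_{i,j}|X_{ij}|^2$ and invoke the strong law of large numbers to get a.s.\ convergence to $\dE|X_{11}|^2=1$, hence the bound by $2$ for $n\gg1$. The extra remarks you make about applying the SLLN over nested squares are a harmless elaboration of the same step the paper states in one line.
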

 
  \begin{proof}
We have $ \frac {1}{n} \sum_{i=1}^n s_i^2  ( X/ \sqrt n ) = \frac {1}{ n^2}  \tr X^* X = \frac {1}{ n^2} \sum_{1 \leq i,j \leq n} |X_{ij} |^2$,
and the latter converges a.s. to $1$ by the law of large number. 
 \end{proof}

 \begin{cor}
 \label{cor:large}
Let $0 < \alpha \leq 2$ and let $(M_n)_{n \geq 1}$ be a sequence of $n \times n$ complex matrices such that $x \mapsto x^{\alpha}$ is uniformly bounded for $(\nu_{M_n})_{n \geq 1}$. Then, a.s.  $x \mapsto x^{\alpha}$  is uniformly bounded for $(\nu_{X_n / \sqrt n + M_n})_{n \geq 1}$.
 \end{cor}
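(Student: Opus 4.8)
The plan is to treat the two summands $X_n/\sqrt n$ and $M_n$ separately: the hypothesis controls $\int x^\alpha\,d\nu_{M_n}$, Lemma~\ref{le:large} controls $\int x^2\,d\nu_{X_n/\sqrt n}$ and hence, since $\alpha\le2$, also $\int x^\alpha\,d\nu_{X_n/\sqrt n}$, and an elementary deterministic singular-value inequality then lets me recombine the two contributions.

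First I would dispose of the noise term. By Lemma~\ref{le:large}, almost surely for $n\gg1$ one has $\int x^2\,d\nu_{X_n/\sqrt n}\le2$. Since $0<\alpha\le2$, the function $t\mapsto t^{\alpha/2}$ is concave on $\dR_+$, so Jensen's inequality applied to the uniform probability measure on the squared singular values $\{s_i(X_n/\sqrt n)^2:1\le i\le n\}$ gives
$$
\int x^\alpha\,d\nu_{X_n/\sqrt n}=\frac1n\sum_{i=1}^n\big(s_i(X_n/\sqrt n)^2\big)^{\alpha/2}\le\Big(\int x^2\,d\nu_{X_n/\sqrt n}\Big)^{\alpha/2}\le2^{\alpha/2}.
$$
Thus, almost surely, $x\mapsto x^\alpha$ is uniformly bounded for $(\nu_{X_n/\sqrt n})_{n\ge1}$.

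Next I would use the elementary singular-value bound $s_{i+j-1}(A+B)\le s_i(A)+s_j(B)$, valid for all $n\times n$ matrices $A,B$ whenever $i+j-1\le n$. Taking $j=i$ shows $s_{2i-1}(A+B)\le s_i(A)+s_i(B)$, and taking first index $i+1$ and second index $i$ shows $s_{2i}(A+B)\le s_{i+1}(A)+s_i(B)\le s_i(A)+s_i(B)$; grouping the $n$ singular values of $A+B$ into consecutive pairs (with an obvious adjustment when $n$ is odd) and using the numerical inequality $(u+v)^\alpha\le2(u^\alpha+v^\alpha)$ for $u,v\ge0$ and $0<\alpha\le2$, summation over $i$ yields a finite constant $c_\alpha$, depending only on $\alpha$, such that
$$
\int x^\alpha\,d\nu_{A+B}\le c_\alpha\Big(\int x^\alpha\,d\nu_A+\int x^\alpha\,d\nu_B\Big).
$$
Applying this with $A=X_n/\sqrt n$ and $B=M_n$, and combining the previous step with the assumption $\limsup_n\int x^\alpha\,d\nu_{M_n}<\infty$, we obtain that almost surely $\limsup_n\int x^\alpha\,d\nu_{X_n/\sqrt n+M_n}<\infty$, which is the claim.

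The argument has essentially no hard step: the only probabilistic ingredient is the law of large numbers already packaged in Lemma~\ref{le:large}, and the associated almost sure event does not depend on the deterministic matrices $M_n$, so no uniformity in $(M_n)_{n\ge1}$ needs to be argued. The only point requiring mild care is the bookkeeping in the singular-value inequality of the second step — the pairing and the verification that the resulting constant $c_\alpha$ depends on $\alpha$ alone — which one could alternatively circumvent by invoking the (quasi-)triangle inequality for the Schatten $\alpha$-norm (Rotfel'd's inequality when $\alpha\le1$).
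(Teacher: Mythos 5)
Your proof is correct and follows essentially the same route as the paper's: both rest on the Weyl--Horn singular value inequality $s_{i+j-1}(A+B)\le s_i(A)+s_j(B)$ (taken from the cited Horn--Johnson reference) together with Lemma~\ref{le:large} for the noise term, and both assemble these into a bound of the form $\int x^\alpha\,d\nu_{A+B}\le c_\alpha\bigl(\int x^\alpha\,d\nu_A+\int x^\alpha\,d\nu_B\bigr)$. The only cosmetic difference is that you make the passage from $\int x^2\,d\nu_{X_n/\sqrt n}\le2$ to a bound on $\int x^\alpha\,d\nu_{X_n/\sqrt n}$ explicit via Jensen, which the paper leaves implicit.
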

  
\begin{proof}
If $M, N$ are $n \times n$ complex matrices, from \cite[Theorem 3.3.16]{MR1091716}, for all $1 \leq i  ,  j \leq n$ with $1 \leq i + j \leq n + 1$,  
\begin{equation*}\label{eq:weyl}
s_{i + j  - 1} ( M  + N ) \leq s_{i} ( M ) + s_j ( N).
\end{equation*}
Hence,
$$
s_{2i} ( M + N ) \leq s_{2 i  - 1 }(M + N ) \leq s_{i} ( M ) + s_i ( N).
$$
We deduce that for any non-decreasing function, $f :  \dR_+ \to  \dR_+$ and $t >0$, 
\begin{equation*}\label{eq:weylunif}
\int f (x) d \nu_{ M + N} \leq  2 \int f (2x)  d \nu_{ M} +  2 \int f (2x )  d \nu_{N}, 
\end{equation*}
where we have used the inequality
\begin{equation*}
f ( x + y )  \leq f ( 2x)  + f (2y ) . 
\end{equation*}
Now, in view of Lemma \ref{le:large}, we may apply the above inequality to $f(x) = x^\alpha$ and deduce the statement. 
\end{proof}

The above corollary settles the problem of the uniform integrability of $\ln ( \cdot)$ at $+ \infty$ for $\nu_{X/ \sqrt n + M}$. The uniform integrability at $0+$ is a much more delicate matter.  The next theorem is a deep result of Tao and Vu. 
 \begin{thm}[Small singular values, \cite{MR2409368,tao-vu-cirlaw-bis}]\label{le:small}
Let $(M_n)_{n \geq 1}$ be a sequence of $n \times n$ complex invertible matrices such that $x \mapsto x^\alpha$ is uniformly bounded for $(\nu_{M_n})_{n \geq 1}$ for some $\alpha >0$.  There exist  $c_1, c_0>0$ such that  a.s. for $n \gg 1$, 
 $$
s_{n} ( X_n / \sqrt n  + M_n  ) \geq n^{-c_1}.
 $$
Moreover for $i \geq n^{1 - \gamma}$ with $\gamma = 0.01$, a.s. for $n \gg 1$, 
 $$
 s_{n - i } ( X_n / \sqrt n  + M_n ) \geq  c_0 \frac{i}{n}.
 $$
\end{thm}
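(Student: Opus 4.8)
Both conclusions will be derived from lower bounds on distances between columns or rows of $A := X_n/\sqrt n + M_n$ and independent linear subspaces, following the circle of ideas of Rudelson--Vershynin and Tao--Vu. First I would observe that the hypothesis $\int x^\alpha\,d\nu_{M_n}\le C$ forces $\|M_n\|_{\mathrm{op}} = s_1(M_n)\le (Cn)^{1/\alpha}$, so the deterministic shift is only polynomially large, which is the regularity Tao--Vu require of a shift. I would also carry out the standard truncation, replacing each $X_{ij}$ by $X_{ij}\ind_{|X_{ij}|\le n^{\e}}$, recentred and rescaled to restore mean $0$ and variance $1$, for a small fixed $\e>0$: by a routine (if slightly delicate) argument this changes every $s_k(A)$ negligibly off an event of probability $o(n^{-2})$, and afterwards all moments of the entries are bounded by powers of $n$, which is what the concentration and anticoncentration estimates below need.

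\emph{Intermediate singular values.} Fix $i$ with $n^{1-\gamma}\le i<n$, put $m := n-\lceil i/2\rceil$, and let $A^{(m)}$ be the $m\times n$ matrix of the first $m$ rows $R_1,\dots,R_m$ of $A$. Cauchy interlacing between $AA^*$ and its $m\times m$ top-left block gives $s_{n-i}(A)\ge s_{n-i}(A^{(m)})$, and since $s_{n-i}(A^{(m)})\ge\dots\ge s_m(A^{(m)})$ is a chain of at least $i/2$ terms, the negative second moment identity $\sum_{l\le m}s_l(A^{(m)})^{-2}=\sum_{j\le m}\dist(R_j,H_j)^{-2}$, with $H_j:=\vect\{R_l: l\le m,\ l\ne j\}$, yields $s_{n-i}(A)^{-2}\le \frac{2}{i}\sum_{j\le m}\dist(R_j,H_j)^{-2}$. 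Writing $R_j=X_j/\sqrt n+M_j$, one has $\dist(R_j,H_j)\ge \dist(X_j/\sqrt n,\widetilde H_j)=\frac{1}{\sqrt n}\|P_jX_j\|$, where $\widetilde H_j:=H_j+\dC M_j$ and $P_j$ is the orthogonal projection onto $\widetilde H_j^{\perp}$; the key points are that $\mathrm{rank}\,P_j\ge n-m=\lceil i/2\rceil$ and that $P_j$ is independent of $X_j$ (it depends only on $X_l,M_l$ for $l\ne j$ and on $M_j$). A Hanson--Wright lower-tail bound then gives $\dP(\|P_jX_j\|^2\le \tfrac12\,\mathrm{rank}\,P_j\mid P_j)\le e^{-c\,\mathrm{rank}\,P_j}\le e^{-ci/2}$ (this is where $\e$ must be small compared with $1-\gamma$). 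Since $i\ge n^{1-\gamma}$, a union bound over the $\le n$ indices $j$ and the $\le n$ values of $i$ shows that, off an event of probability $\le n^2e^{-cn^{1-\gamma}/2}$, one has $\dist(R_j,H_j)^2\ge i/(4n)$ for all relevant $i,j$, whence $s_{n-i}(A)^{-2}\le \frac{2}{i}\cdot m\cdot\frac{4n}{i}\le \frac{8n^2}{i^2}$, i.e. $s_{n-i}(A)\ge c_0\, i/n$.

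\emph{Least singular value.} For $s_n(A)=\inf_{\|v\|=1}\|Av\|$ I would run the Tao--Vu inversion-via-distance scheme. Split the unit sphere of $\dC^n$ into compressible vectors (those within Euclidean distance $\delta$ of some vector supported on at most $\delta n$ coordinates) and incompressible ones. For compressible $v$ one gets $\|Av\|$ bounded below by a constant with probability $1-e^{-cn}$, via an $\e$-net on the sparse support together with the polynomial bound on $\|M_n\|_{\mathrm{op}}$ and a lower estimate for a suitable block of columns of $X$. For incompressible $v$, the geometric inequality $\|Av\|\ge c\,n^{-1/2}\min_k\dist(C_k,H'_k)$, where $C_k$ is the $k$-th column of $A$ and $H'_k$ the span of the other columns, reduces matters to showing that $\dist(C_k,H'_k)=|\langle w_k,C_k\rangle|$ is not polynomially small, $w_k$ being a unit normal to $H'_k$ (and independent of $C_k$). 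The Esseen concentration-function inequality applied to $\langle w_k,X_{\cdot k}\rangle/\sqrt n+\langle w_k,M_{\cdot k}\rangle$ bounds $\sup_{z\in\dC}\dP(|\langle w_k,C_k\rangle-z|\le t\mid w_k)$ in terms of how spread out $w_k$ is; the deep input of Tao--Vu is that with probability $1-n^{-\omega(1)}$ the random normal $w_k$ is itself incompressible, which makes this supremum $\le C(t+n^{-1/2+o(1)})$. Taking $t=n^{-c_1}$ and a union bound over $k$ then gives $\dP(s_n(A)\le n^{-c_1})\le Cn^{-c_1+1/2+o(1)}$.

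Finally, taking $c_1$ large enough makes the two failure probabilities summable in $n$, so Borel--Cantelli upgrades both estimates to hold almost surely for $n\gg1$. I expect the main obstacle to be the least-singular-value bound, and within it the verification that the random normal vector $w_k$ is incompressible in the presence of the (polynomially large) deterministic shift $M_n$: this is the heart of the Tao--Vu analysis, and it is also the reason the bound at the very edge degrades to $n^{-c_1}$ rather than $\gtrsim 1/n$. By contrast the intermediate-index estimate is comparatively soft --- it only uses exponential concentration of $\|P_jX_j\|^2$ at scale $i/n$, which is available precisely because $i\ge n^{1-\gamma}$; the value $\gamma=0.01$ is a convenient choice, not an optimized one.
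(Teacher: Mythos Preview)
The paper does not actually prove this theorem: its entire argument is a pair of citations, attributing the least-singular-value bound to Theorem~2.1 of \cite{MR2409368} and the intermediate-singular-value bound to the proof of Theorem~1.20 (via Proposition~5.1) in \cite{tao-vu-cirlaw-bis}, with the sole observation that the latter proposition is unaffected when one passes from rows of $X_n$ to rows of $X_n+\sqrt n\,M_n$. You have instead sketched the substance of those Tao--Vu arguments, and the sketch is faithful to them: the negative-second-moment identity, the passage to distances $\dist(R_j,H_j)$, the enlargement $\widetilde H_j=H_j+\dC M_j$ restoring independence of $P_j$ from $X_j$, and concentration of $\|P_jX_j\|^2$ at scale $\operatorname{rank}P_j\ge i/2$ is exactly the mechanism behind Proposition~5.1; and for $s_n$ the compressible/incompressible split together with column-distance anticoncentration is the scheme of \cite{MR2409368}. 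Two small inaccuracies do not affect the conclusion: Tao--Vu obtain the lower tail of $\|P_jX_j\|^2$ via a Talagrand-type concentration inequality rather than Hanson--Wright, and their least-singular-value result actually yields failure probability $n^{-A}$ for any prescribed $A$ once $c_1$ is taken large enough, stronger than your $n^{-c_1+1/2+o(1)}$ --- but you only need summability. Your diagnosis of where the depth lies (incompressibility of the random normal under a polynomial shift, versus the comparatively soft intermediate-index estimate) is also correct.
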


 \begin{proof}
The first statement is  Theorem 2.1 in \cite{MR2409368} and the second is contained in \cite{tao-vu-cirlaw-bis} (see the proof of Theorem 1.20 and observe that the statement of Proposition 5.1 remains unchanged if we consider a row of the matrix $X_n  + \sqrt n M_n$).
 \end{proof}

\begin{proof}[Proof of Theorem \ref{th:UI}]

By Corollary \ref{cor:large}, it is sufficient to prove that $x \mapsto x^{-\beta}$ is uniformly bounded  for $(\nu_{X/ \sqrt n + M})$ and some $\beta >0 $. We have
 \begin{equation*}\label{eq:tightnessa}
 \limsup_n \frac{1}{n} \sum_{i =1 }^n  s^{-\beta}_{i} (X/ \sqrt n  + M  ) < \infty, 
 \end{equation*}
By Theorem \ref{le:small}, we may a.s. write for $n \gg 1$, 
\begin{eqnarray*}
\frac{1}{n} \sum_{i =1 }^n  s^{-\beta}_{i} (X/ \sqrt n  + M  ) & \leq & \frac{1}{n} \sum_{i =1 }^{\lfloor n^{1-\gamma} \rfloor } n^{\beta c_1  } + \frac{1}{n}\sum_{i = \lfloor n^{1-\gamma}  \rfloor +1}^n c_2  \left( \frac{n}{i} \right)^{\beta} \\
& \leq & n^{\beta c_1  - \gamma } + \frac{1}{n} \sum_{i =1}^n c_2  \left( \frac{n}{i} \right)^{\beta}.
\end{eqnarray*}
 This last expression is uniformly bounded if $0 < \beta < \min ( \gamma /  c_1  , 1)$. \end{proof}

\subsection{End of proof of Theorem \ref{prop:taovuuniversality}}
  
If $\rho$ is a probability measure on $\dC\backslash \{0\}$, we define $\check  \rho$ as the pull-back measure of $\rho$ under $\phi : z \mapsto 1 / z$,  for any Borel $E$ in  $\dC\backslash \{0\}$, $\check  \rho (E) = \rho (\phi^{-1} (E))$. Obviously, if $(\rho_n)_{n \geq 1}$ is a sequence of probability measures on $\dC\backslash \{0\}$, then $\rho_n$ converges weakly to $\rho$ is equivalent to $\check \rho_n$ converges weakly to $\check \rho$.

Note that by Theorem \ref{th:UI}, a.s. for $n \gg 1$, $X_n$ is invertible and $x \mapsto x^{-\beta} +  x^{\beta}$ is uniformly bounded for $(\nu_{X_n/\sqrt n})_{n \geq 1}$. Also, from the quarter circular law theorem, $\nu_{X_n/ \sqrt n}$ converges a.s. to a probability distribution with density 
\begin{equation*}
\frac{1}{\pi}\sqrt{4-x^2}\ind_{[0,2]}(x),
\end{equation*}
(see Marchenko-Pastur theorem \cite{marchenko-pastur,MR0467894,MR862241}). From the independence of $(X_{ij}),(Y_{ij}),(G_{ij}),(H_{ij})$, we may apply Corollary \ref{cor:girko}, Theorem \ref{th:DS} and Theorem \ref{th:UI} conditioned on $(X_{ij})$ to $M_n = z X_n / \sqrt n $. We get a.s. 
$$
\mu_{X^{-1} Y} - \mu_{X^{-1} H} \underset{n\to\infty}{\weak}  0. 
$$
By Theorem \ref{le:small}, a.s. for $n \gg 1$, $X^{-1} H$ and $G^{-1} H$ are invertible, it follows that 
$$
\mu_{X^{-1} H} - \mu_{G^{-1} H} \underset{n\to\infty}{\weak}  0 \quad \hbox{ is equivalent to } \quad \check \mu_{X^{-1} H} - \check \mu_{G^{-1} H} \underset{n\to\infty}{\weak} 0.
$$
 However since $\mu_{MN} = \mu_{NM}$ and $\check \mu_{M} =  \mu_{M^{-1}}$, we get  
  $$
\mu_{X^{-1} H} - \mu_{G^{-1} H} \underset{n\to\infty}{\weak}  0 \quad \hbox{ is equivalent to } \quad  \mu_{H^{-1} X} -  \mu_{H^{-1} G} \underset{n\to\infty}{\weak} 0.
$$
The right hand side holds by applying again, Corollary \ref{cor:girko}, Theorem \ref{th:DS} and Theorem \ref{th:UI}.

\section{Proof of Theorem \ref{th:univ}}

\subsection{Bounds on singular values}

\begin{lem}[Singular values of sum and product]\label{le:kyfan}
If $M, N$ are $n\times n$ complex  matrices, for any $\alpha > 0$, 
\begin{eqnarray*}
 \int x^{\alpha} d \nu_{M +  N} &  \leq & 2^{1+\alpha}  \left( \int x^{\alpha} d \nu_{M }  +  \int x^{\alpha} d \nu_{N}  \right),  \\
   \int x^{\alpha} d \nu_{M N} & \leq&   2 \left( \int x^{2 \alpha} d \nu_M \right)^{1/2}\left( \int x^{2 \alpha} d \nu_N \right)^{1/2}.
\end{eqnarray*}
\end{lem}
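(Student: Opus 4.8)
The two inequalities are standard consequences of Weyl-type majorization inequalities for singular values. I will prove each separately.

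For the first inequality, I will start from the additive Weyl inequality already recalled in the proof of Corollary \ref{cor:large}, namely $s_{i+j-1}(M+N)\leq s_i(M)+s_j(N)$ for $1\leq i+j\leq n+1$. Specializing $j=i$ gives $s_{2i}(M+N)\leq s_{2i-1}(M+N)\leq s_i(M)+s_i(N)$. Applying the elementary bound $(a+b)^\alpha\leq 2^\alpha(a^\alpha+b^\alpha)$ for $a,b\geq0$ and $\alpha>0$, and summing over $i$, I get
$$
\sum_{k=1}^{n} s_k^\alpha(M+N)\;\leq\; 2\sum_{i=1}^{n} s_i^\alpha(M+N)\wedge\cdots
$$
— more precisely, since each index $k\in\{1,\dots,n\}$ is of the form $2i$ or $2i-1$ with $i\leq n$, one has $\sum_{k=1}^n s_k^\alpha(M+N)\leq 2\sum_{i=1}^{n}\bigl(s_i(M)+s_i(N)\bigr)^\alpha\leq 2^{1+\alpha}\sum_{i=1}^n\bigl(s_i^\alpha(M)+s_i^\alpha(N)\bigr)$. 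Dividing by $n$ yields exactly the claimed bound $\int x^\alpha d\nu_{M+N}\leq 2^{1+\alpha}\bigl(\int x^\alpha d\nu_M+\int x^\alpha d\nu_N\bigr)$.

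For the second inequality, the relevant tool is the multiplicative Horn inequality $\prod_{k=1}^{m}s_k(MN)\leq\prod_{k=1}^m s_k(M)s_k(N)$, or rather its additive (log-majorization) corollary $s_{i+j-1}(MN)\leq s_i(M)s_j(N)$ for $1\leq i+j\leq n+1$ (see \cite[Theorem 3.3.4]{MR1091716}). Taking $j=i$ gives $s_{2i}(MN)\leq s_{2i-1}(MN)\leq s_i(M)s_i(N)$, so $\sum_{k=1}^n s_k^\alpha(MN)\leq 2\sum_{i=1}^n s_i^\alpha(M)s_i^\alpha(N)$. Now apply the Cauchy--Schwarz inequality to the sum on the right:
$$
\sum_{i=1}^n s_i^\alpha(M)s_i^\alpha(N)\;\leq\;\Bigl(\sum_{i=1}^n s_i^{2\alpha}(M)\Bigr)^{1/2}\Bigl(\sum_{i=1}^n s_i^{2\alpha}(N)\Bigr)^{1/2}.
$$
Dividing by $n$ and rewriting $\frac1n\sum = \int\,d\nu$, the factor $\frac1n$ splits as $\frac1{\sqrt n}\cdot\frac1{\sqrt n}$ across the two square roots, giving $\int x^\alpha d\nu_{MN}\leq 2\bigl(\int x^{2\alpha}d\nu_M\bigr)^{1/2}\bigl(\int x^{2\alpha}d\nu_N\bigr)^{1/2}$, as desired.

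I do not anticipate a genuine obstacle here: the proof is entirely a matter of invoking the correct classical majorization inequalities (Weyl for sums, Horn for products) and combining them with $(a+b)^\alpha\leq 2^\alpha(a^\alpha+b^\alpha)$ and Cauchy--Schwarz. The only point requiring a little care is the bookkeeping of the factor $2$ coming from the pass from indices $k$ to indices $i$ via $k\in\{2i-1,2i\}$, exactly as in the proof of Corollary \ref{cor:large}, and making sure the index ranges in the Weyl/Horn inequalities are respected (which they are, since $i+i-1=2i-1\leq n$ whenever $2i-1\leq n$).
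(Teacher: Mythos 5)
Your proof is correct and follows essentially the same route as the paper: the additive Weyl inequality $s_{i+j-1}(M+N)\le s_i(M)+s_j(N)$ with $j=i$ for the sum, the multiplicative inequality $s_{i+j-1}(MN)\le s_i(M)s_j(N)$ with $j=i$ for the product, followed by $(a+b)^\alpha\le 2^\alpha(a^\alpha+b^\alpha)$ and Cauchy--Schwarz respectively. The only slip is cosmetic: the multiplicative inequality you want is the one the paper cites as \cite[Theorem 3.3.16]{MR1091716} (which covers both the additive and multiplicative forms), not Theorem 3.3.4.
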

\begin{proof}
The first statement was already treated in the proof of Corollary \ref{cor:large}. Also, from \cite[Theorem 3.3.16]{MR1091716}, for all $1 \leq i  ,  j \leq n$ with $1 \leq i + j \leq n +1 $,  
\begin{equation*}\label{eq:weyl}
s_{i + j -1 } ( M  N ) \leq s_{i} ( M ) s_{j} ( N).
\end{equation*}
Hence,
$$
s_{2i} ( M N ) \leq s_{2 i-1}(M N ) \leq s_{i} ( M ) s_i ( N).
$$
We deduce 
\begin{equation*}\label{eq:weylunif}
\int x^\alpha  d \nu_{ M  N} \leq \frac 2 n \sum_{i=1}^n s^\alpha_{i} ( M ) s^\alpha_i ( N).
\end{equation*}
We conclude by applying the Cauchy-Schwarz inequality. 
\end{proof}

\subsection{Logarithmic potential and Girko's hermitization method}

We denote by $\cD'(\dC)$ the set of Schwartz distributions endowed with its usual convergence with respect
to all infinitely differentiable functions with bounded support. Let $\mathcal{P}(\dC)$ be the set of probability measures on $\dC$ which
integrate $\ln\ABS{\cdot}$ in a neighborhood of infinity. For every
$\mu\in\mathcal{P}(\dC)$, the \emph{logarithmic potential} $U_\mu$ of $\mu$ on
$\dC$ is the function $U_\mu:\dC\to[-\infty,+\infty)$ defined for every
$z\in\dC$ by
\begin{equation*}\label{eq:logpot}
  U_\mu(z)=  \int_{\dC}\!\ln|z-z'|\,\mu(dz'),
\end{equation*}
(in classical potential theory, the definition is opposite in sign). Since
$\ln\ABS{\cdot}$ is Lebesgue locally integrable on $\dC$, one can check by
using the Fubini theorem that $U_\mu$ is Lebesgue locally integrable on $\dC$.
In particular, $U_\mu<\infty$ a.e. (Lebesgue almost everywhere) \ and
$U_\mu\in\mathcal{D}'(\dC)$. Since $\ln\ABS{\cdot}$ is the fundamental
solution of the Laplace equation in $\dC$, we have, in $\mathcal{D}'(\dC)$,
\begin{equation}\label{eq:lap}
  \Delta U_\mu= \pi \mu,
\end{equation}
where $\Delta$ is the Laplace differential operator on $\mathbb{C}$ is given by
$
\Delta =\frac{1}{4}(\partial^2_x+\partial^2_y).
$

We now state an alternative statement of Lemma \ref{le:girko} which is closer to Girko's original method, for a proof see \cite[Lemma A.2]{cirmar}.
\begin{lem}[Girko's hermitization method]\label{le:girko2}
  Let $A_n$ be a $n \times n$ complex random matrix. Suppose that for a.a.\ $z\in\dC$, a.s.\
  \begin{itemize}
  \item[(i)] $\nu_{A_n-z}$ tends weakly to a probability measure $\nu_z$ on $\dR_+$, 
  \item[(ii)] $\ln(\cdot)$ is uniformly integrable for
    $(\nu_{A_n-z})_{n\geq1}$.
  \end{itemize}
 Then there exists a probability measure $\mu\in\mathcal{P}(\dC)$ such
  that a.s. 
  \begin{itemize}
  \item[(j)]  $\mu_{A_n}$ converges weakly to $\mu$ 
  \item[(jj)] for a.a. $z\in\dC$,  
     $$
   U_\mu (z)  =  \int \!\ln(x)\,d \nu_z.
    $$
  \end{itemize}
Moreover the same holds if we replace (i) by
    \begin{itemize}
  \item[(i')] $\int \ln (x) d \nu_{A_n-z}$ tends to $\int \!\ln(x)\,d \nu_z$. 
    \end{itemize}
  \end{lem}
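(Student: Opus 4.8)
The plan is to follow Girko's hermitization method, exactly as in \cite[Lemma~A.2]{cirmar}. The starting point is the \emph{hermitization identity}: for any $n\times n$ complex matrix $B$ and any $z\in\dC$ outside the (finite) set of eigenvalues of $B$,
\[
U_{\mu_B}(z)=\frac1n\ln\ABS{\det(B-z)}=\frac1n\sum_{k=1}^{n}\ln s_k(B-z)=\int_0^\infty\ln(x)\,d\nu_{B-z}(x),
\]
since $\ABS{\det(B-z)}=\prod_{k=1}^{n}s_k(B-z)$. Applied to $B=A_n$, this holds, for each realization, for all $z$ outside a countable (hence Lebesgue-null) set and simultaneously for all $n$; so controlling $\mu_{A_n}$ through its logarithmic potential amounts to controlling the real random fields $z\mapsto\int\ln(x)\,d\nu_{A_n-z}$. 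I would first note that weak convergence $\nu_{A_n-z}\weak\nu_z$ together with the uniform integrability in (ii) yields $\int\ln(x)\,d\nu_{A_n-z}\to\int\ln(x)\,d\nu_z$ and that $\nu_z$ integrates $\ln$; hence (i)--(ii) imply (i'), and it is enough to argue under (i'). Setting $U(z):=\int\ln(x)\,d\nu_z$ (finite for a.a.\ $z$), the identity above and (i') give: for a.a.\ $z$, a.s., $U_{\mu_{A_n}}(z)\to U(z)$. Because the singular values of $A_n-z$ depend jointly measurably on $z$ and on the matrix entries, Tonelli's theorem on $\Omega\times\dC$ lets me interchange the two ``for a.a.'': a.s., for a.a.\ $z$, $U_{\mu_{A_n}}(z)\to U(z)$. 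I then fix a generic realization on which this holds and set $\mu_n:=\mu_{A_n}$.

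It remains to prove the following deterministic potential-theoretic fact: if $(\mu_n)$ are probability measures on $\dC$ with $U_{\mu_n}\to U$ Lebesgue-a.e.\ and $U$ finite a.e., then $U$ coincides a.e.\ with $U_\mu$ for some $\mu\in\mathcal{P}(\dC)$ and $\mu_n\weak\mu$. The first step is to upgrade a.e.\ convergence to $L^1_{\mathrm{loc}}(\dC)$ convergence. The negative parts of $U_{\mu_n}$ are uniformly integrable on compacts by Fubini (the logarithmic singularity is locally integrable, uniformly in the base point), and for the positive parts I would transport the uniform integrability of $\ln(\cdot)$ for $(\nu_{A_n-z})$ through the hermitization identity and the Weyl majorization $\sum_{k\le j}\ln\ABS{\lambda_k(A_n)}\le\sum_{k\le j}\ln s_k(A_n)$ (see \cite{MR1091716}) to obtain uniform integrability of $z'\mapsto(\ln\ABS{z'})^{+}$ for $(\mu_n)$, hence of $(U_{\mu_n})$ on compacts; Vitali's theorem then gives $U_{\mu_n}\to U$ in $L^1_{\mathrm{loc}}(\dC)$, a fortiori in $\cD'(\dC)$.

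Applying the Laplacian and (\ref{eq:lap}), $\pi\mu_n=\Delta U_{\mu_n}\to\Delta U$ in $\cD'(\dC)$, so every vague limit point of $(\mu_n)$ equals $\tfrac1\pi\Delta U=:\mu$; in particular the vague limit is unique, $U_\mu=U$ a.e., and $\mu_n\to\mu$ vaguely. Finally one checks that no mass escapes to infinity: from the $L^1_{\mathrm{loc}}$ bound one gets, for $R\ge2$ and an absolute constant $C$, $\int_{\ABS{z}<1}U_{\mu_n}(z)\,dz\ge\pi\ln(R-1)\,\mu_n(\{\ABS{z'}\ge R\})-C$; were $\mu_n(\{\ABS{z'}\ge R_k\})\ge\varepsilon$ along a subsequence with $R_k\to\infty$, the left-hand side would diverge although it must converge to the finite quantity $\int_{\ABS{z}<1}U$, a contradiction. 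Hence $(\mu_n)$ is tight, $\mu$ is a probability measure, and vague convergence becomes weak convergence: this is (j), and (jj) is precisely $U_\mu=U=\int\ln(x)\,d\nu_z$ a.e. Since $\mu$ is determined by (jj) from the deterministic family $(\nu_z)_{z}$, the same $\mu$ works for a.s.\ every realization, so $\mu$ is non-random and belongs to $\mathcal{P}(\dC)$.

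The main obstacle is this last, potential-theoretic, part — and inside it the two quantitative points: promoting a.e.\ convergence of the logarithmic potentials to $L^1_{\mathrm{loc}}$ convergence, which is exactly where hypothesis (ii) (uniform integrability, not merely a uniform bound on $\int\ABS{\ln x}\,d\nu_{A_n-z}$) is needed and where the Weyl majorization enters, and ruling out escape of mass, so that the distributional limit $\tfrac1\pi\Delta U$ is a genuine probability measure rather than a sub-probability one.
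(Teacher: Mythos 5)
Your proof is correct and reproduces the standard Girko hermitization argument — hermitization identity, Tonelli swap of the two quantifiers, promotion of a.e.\ convergence of the potentials to $\cD'(\dC)$ via uniform integrability plus the Weyl majorization of $\prod_{k\le j}|\lambda_k|$ by $\prod_{k\le j}s_k$, taking $\Delta$, then tightness to rule out mass escaping — which is precisely the scheme of \cite[Lemma A.2]{cirmar}, the reference the paper defers to without supplying its own proof. The only cosmetic remark is that the $L^1_{\mathrm{loc}}$ step really only uses a uniform bound on $\int|\ln x|\,d\nu_{A_n-z_0}$ at one $z_0$; the full uniform integrability in (ii) is what lets you pass from (i) to (i') and is thus needed, just a line earlier than you locate it.
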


\begin{cor}\label{cor:girko2}
Let $A_n, K_n , M_n$ be $n \times n$ complex random matrices. Suppose that a.s. $K_n$ is invertible and $\ln(\cdot)$ is uniformly bounded for $(\nu_{K_n})_{n \geq 1}$, and for a.a.\ $z\in\dC$, a.s.\
   \begin{itemize}
  \item[(i)] $\nu_{A_n +  K_n ^{-1} (M_n -z)}$ tends weakly to a probability measure $\nu_z$,
  \item[(ii)] $\ln(\cdot)$ is uniformly integrable for
    $(\nu_{A_n+ K_n ^{-1} (M_n -z)})_{n\geq1}$.
      \end{itemize}
 Then there exists a probability measure $\mu\in\mathcal{P}(\dC)$ such
  that a.s. 
  \begin{itemize}
  \item[(j)]  $\mu_{M_n + K_n A_n}$ converges weakly to $\mu$,
  \item[(jj)] in $\mathcal{D}'(\dC)$,
     $$
  \mu   = \frac 1 \pi \Delta  \int \!\ln(x)\,d \nu_z.
    $$
  \end{itemize}
   \end{cor}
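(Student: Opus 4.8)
The plan is to follow the proof of Corollary \ref{cor:girko}, with the hermitization Lemma \ref{le:girko2} playing the role of the replacement principle. Put $B_n := M_n + K_n A_n$. Since $K_n$ is a.s.\ invertible, $B_n - z = K_n\bigl(A_n + K_n^{-1}(M_n - z)\bigr)$ for every $z$, hence $\det(B_n - z) = \det(K_n)\,\det\bigl(A_n + K_n^{-1}(M_n-z)\bigr)$ and
\begin{equation*}
\int \ln(x)\,d\nu_{B_n - z} \;=\; \frac1n\ln\ABS{\det K_n} \;+\; \int \ln(x)\,d\nu_{A_n + K_n^{-1}(M_n - z)} .
\end{equation*}
Hypotheses (i)--(ii) say precisely that $\nu_{A_n + K_n^{-1}(M_n-z)}$ converges weakly to $\nu_z$ and that $\ln(\cdot)$ is uniformly integrable along this sequence, so the second term on the right converges a.s.\ to $\int\ln(x)\,d\nu_z$. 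The first term $c_n := \tfrac1n\ln\ABS{\det K_n} = \int\ln(x)\,d\nu_{K_n}$ is independent of $z$ and, by the uniform boundedness of $\ln(\cdot)$ for $(\nu_{K_n})$, bounded in $n$; being $z$-independent it is annihilated by $\Delta$ and will not survive in conclusion (jj).

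It then remains to feed this into Lemma \ref{le:girko2} for $B_n$. The convergence of $\int\ln(x)\,d\nu_{B_n-z}$ (hypothesis (i')) has just been obtained, up to the constant $c_n$. For the uniform integrability of $\ln(\cdot)$ for $(\nu_{B_n-z})$ one transfers it from hypothesis (ii) through left multiplication by $K_n$: the Weyl bounds $s_n(K_n)\,s_i(C)\le s_i(K_nC)\le s_1(K_n)\,s_i(C)$ with $C := A_n + K_n^{-1}(M_n-z)$, together with the product inequalities of Lemma \ref{le:kyfan}, compare $\nu_{B_n-z}$ with $\nu_{K_n}$ and $\nu_{A_n+K_n^{-1}(M_n-z)}$, and the control of $\ln(\cdot)$ for $(\nu_{K_n})$ closes the estimate; the same inequalities give uniform boundedness of $x\mapsto x^\alpha$ for $(\nu_{B_n-z})$. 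Granting this, Lemma \ref{le:girko2}(i')--(ii) applied to $B_n$ produces $\mu\in\mathcal{P}(\dC)$ with, a.s., $\mu_{B_n}\weak\mu$ and $U_\mu(z) = \int\ln(x)\,d\nu_z + c$ for a.a.\ $z$, where $c = \lim_n c_n$ depends only on $(\nu_z)_z$ (its value read off from the $\ABS z\to\infty$ behaviour of $\int\ln(x)\,d\nu_z$). Applying $\Delta$ and using (\ref{eq:lap}), the additive constant drops out and $\mu = \tfrac1\pi\Delta\int\ln(x)\,d\nu_z$ in $\cD'(\dC)$, which is (j)--(jj).

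I expect the delicate step to be the uniform integrability of $\ln(\cdot)$ for $(\nu_{B_n-z})$: the crude bound $\ABS{\ln s_i(K_nC)-\ln s_i(C)}\le\max\bigl(\ABS{\ln s_1(K_n)},\ABS{\ln s_n(K_n)}\bigr)$ is not controlled by the mere uniform boundedness of $\ln(\cdot)$ for $(\nu_{K_n})$ (the extreme singular values of $K_n$ need not be). The safe way around this is to not invoke Lemma \ref{le:girko2} literally on $B_n$ but to rerun its proof at the level of logarithmic potentials: write $U_{\mu_{B_n}}(z)=c_n+\int\ln(x)\,d\nu_{A_n+K_n^{-1}(M_n-z)}$, with $c_n$ bounded and $z$-independent, so that $z\mapsto U_{\mu_{B_n}}(z)-c_n$ converges a.e.\ to $\int\ln(x)\,d\nu_z$ and is locally equi-integrable (automatic by Fubini, $\ln\ABS{\cdot}$ being locally integrable on $\dC$); since $\Delta$ is continuous on $\cD'(\dC)$ this gives $\pi\mu_{B_n}\to\Delta\int\ln(x)\,d\nu_z$ in $\cD'(\dC)$, and tightness of $(\mu_{B_n})$ — which follows from $\int\ln^+(x)\,d\nu_{B_n}\le 2\int\ln^+(x)\,d\nu_{K_n}+2\int\ln^+(x)\,d\nu_{A_n+K_n^{-1}M_n}$ and the hypotheses — upgrades this to the weak convergence in (j). Identifying the constant $c$ is then routine.
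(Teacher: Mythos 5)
Your approach is the same as the paper's: factor $\det(B_n-z)=\det(K_n)\det\bigl(A_n+K_n^{-1}(M_n-z)\bigr)$ with $B_n=M_n+K_nA_n$, observe that the $z$-independent constant $c_n=\tfrac1n\ln|\det K_n|=\int\ln(x)\,d\nu_{K_n}$ is bounded, pass to a subsequence along which it converges, apply the hermitization Lemma~\ref{le:girko2}(i')--(ii), and let $\Delta$ kill the constant via~\eqref{eq:lap}. That is exactly the paper's (very terse) three-line proof.

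What you add, and what the paper silently skips, is the worry about hypothesis (ii) of Lemma~\ref{le:girko2} applied to $B_n$: the corollary only supplies uniform \emph{boundedness} of $\ln(\cdot)$ for $(\nu_{K_n})$, and uniform integrability of $\ln(\cdot)$ for $(\nu_{A_n+K_n^{-1}(M_n-z)})$, neither of which transfers to uniform integrability for $(\nu_{B_n-z})$ through the Horn product bounds (your toy example with a vanishing fraction of extreme singular values of $K_n$ is exactly the kind of obstruction). This is a genuine gap in the literal invocation of Lemma~\ref{le:girko2}; your remedy --- working directly with $U_{\mu_{B_n}}(z)=c_n+\int\ln(x)\,d\nu_{A_n+K_n^{-1}(M_n-z)}$, getting a.e.\ convergence of $U_{\mu_{B_n}}-c_n$, local $L^1$ control via Fubini \emph{together with} tightness of $(\mu_{B_n})$ (your $\ln^+$ estimate), and then applying $\Delta$ in $\cD'(\dC)$ --- is the correct fix, and is in effect a re-run of the proof of Lemma~\ref{le:girko2} from \cite{cirmar} with the extra additive constant carried along. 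Two small caveats in your write-up: the Fubini step is not ``automatic'' from local integrability of $\ln|\cdot|$ alone --- the growth of $\int_K|\ln|z-z'||\,dz$ in $|z'|$ is what forces the tightness hypothesis you state next, so the two points belong together; and passing from $\cD'$-convergence plus tightness to weak convergence of probability measures, together with uniqueness of the subsequential limit (which removes the arbitrary subsequence and identifies $c=\lim c_n$), deserves a sentence, as in the paper it is outsourced to \cite[Lemma A.2]{cirmar}.
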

  \begin{proof}
If $K_n$ is invertible, we write
$$\int \ln (x) d \nu_{ M_n + K_n A_n -z } = \int \ln (x) d \nu_{  A_n + K_n ^{-1} (M_n -z) } + \frac 1 n \ln |Ê\det K_n |Ê.$$ 
By assumption, $ \frac 1 n \ln |Ê\det K_n |Ê=  \int \ln(x) d\nu_{K_n} $ is a.s. bounded. We may thus consider any converging subsequence and apply Lemma \ref{le:girko}(i')-(ii) together with \eqref{eq:lap}. 
\end{proof}

\subsection{End of proof of Theorem \ref{th:univ}}

We first notice that 
$$
\mu_{M + K X L / \sqrt n } = \mu_{L M L^{-1} + L K X / \sqrt n}. 
$$ 
It is thus sufficient to prove that the right hand side converges. We set $\wt M = L M L^{-1}$ and $\wt K = L K$. Since $\wt K^{-1} ( \wt M - z) = K^{-1} M L^{-1} - K^{-1} L^{-1} z$, we may apply Lemma \ref{le:kyfan} and deduce that $x \mapsto x^{\alpha/4}$ is uniformly bounded for $(\nu_{\wt K_n ( \wt M_n - z) })_{n \geq 1}$. It only remains to invoke Theorem \ref{th:UI} and Theorem \ref{th:DS} applied to $\wt K^{-1} ( \wt M - z)$, and use Corollary \ref{cor:girko2} for $\wt M + \wt KX / \sqrt n$.

\subsection{Explicit expression of the limit spectral measure}

Let $\dC_+ = \{ z \in \dC: \Im (z) >0 \}$, for a probability measure $\rho$ on $\dR$, its \emph{Cauchy-Stieltjes transform} is defined as, for all $z \in \dC_+$, 
$$
m_{\rho} (z) = \int \frac{1}{x - z} d\rho(x).
$$
By Corollary \ref{cor:girko2} and Theorem \ref{th:DS}, in $\mathcal{D}'(\dC)$,
\begin{equation}\label{eq:defmu}
\mu =  \frac {1}{2 \pi} \Delta  \int \!\ln(x)\,d \rho_z (x),
\end{equation}
where for $z \in \dC$, $\rho_z$ is a probability distribution on $\dR_+$. From \cite{DS}, for a.a. $z \in \dC$, $\rho_z$ has a Cauchy-Stieltjes transform that satisfies the integral equation: for all $w \in \dC_+$, 
\begin{equation}\label{eq:resDS}
m_{\rho_z} (w) =\int \frac{2 x (1 + m_{\rho_z} (w) ) }{x^2 - (1 + m_{\rho_z} (w) )^2  w }d \nu_z (x),
\end{equation}
where $\nu_z$ is as in Theorem \ref{th:univ}.
\bibliographystyle{amsplain}
\bibliography{heavygirko}

\section*{Acknowledgment}

The author is indebted to Tim Rogers for pointing reference \cite{R} which has initiated this work, and thanks Djalil Chafa\"i and Manjunath Krishnapur for sharing their enthusiasm on non-hermitian random matrices.

\end{document}